\newtheorem{theorem}{Theorem}[section]
\newtheorem{prop}[theorem]{Proposition}
\newtheorem{definition}{Definition}
\newtheorem{re}{Remark}
\newenvironment{proof}{\noindent {\em Proof  }}{\hspace*{1cm}
        \hspace*{\fill}$\rule{1.2ex}{1.4ex}$\vspace{.15cm}}
\begin{document}
\title{\bf On the low regularity of the fifth order
Kadomtsev-Petviashvili I equation}
\author{ Wengu Chen$^1$, Junfeng Li$^{2}$ and Changxing Miao$^1$\\$^1$Institute of Applied Physics
and Computational Mathematics\\P.O.Box 8009, Beijing 100088,
China\\$^2$ School of Mathematical Sciences\\Laboratory of Math and
Complex Systems, Ministry of Education\\Beijing Normal University,
Beijing 100875, P. R. China \hspace{0.5cm}}

\date{E-mail:\,chenwg@iapcm.ac.cn,\,  junfli@yahoo.com.cn,\, miao$_-$changxing@iapcm.ac.cn.}

\maketitle

{\bf Abstract.}\, We consider the fifth order Kadomtsev-Petviashvili
I (KP-I) equation as
$\partial_tu+\alpha\partial_x^3u+\partial^5_xu+\partial_x^{-1}\partial_y^2u+uu_x=0,$
while $\alpha\in \mathbb{R}$. We introduce an interpolated energy
space $E_s$ to consider the well-posedeness of the initial value
problem (IVP) of the fifth order KP-I equation. We obtain the local
well-posedness of IVP of the fifth order KP-I equation in $E_s$ for
$0<s\leq1$. To obtain the local well-posedness, we present a
bilinear estimate in the Bourgain space in the framework of the
interpolated energy space. It crucially depends on the dyadic
decomposed Strichartz estimate,
the fifth order dispersive smoothing effect and maximal estimate. \\
{\bf Key words:}\, The fifth order KP-I equation, Bourgain space,
Dyadic decomposed Strichartz estimate, Dispersive smoothing effect,
Maximal
estimate.\\
{\bf 2000 Mathematics Subject Classification:}\, 35Q53,
35G25.

\section{Introduction}

We consider the initial value problem (IVP) of the fifth order
Kadomtsev-Petviashvili (KP) equation
\begin{eqnarray}\label{5KP}
\left\{\begin{array}{ll}
\partial_tu+\alpha\partial_x^3u+\beta\partial_x^5u+\partial_x^{-1}\partial_y^2u+u\partial_xu=0, \\
u(0,x,y)=u_0(x,y),\quad\quad(x,y)\in\mathbb{R}^2.\end{array}\right.\end{eqnarray}
Here $\alpha,\beta\in\mathbb{R}$ and $u_0$ is a real valued
function. If $\beta>0$ the equation (\ref{5KP}) is called the fifth
order KP-I and if $\beta<0$ it takes the name the fifth order KP-II.
This equation occurs naturally in the modeling of a long dispersive
wave. Kawahara \cite{Kaw} introduced the fifth order Korteweg-de
Vries equation
\begin{equation}\label{5KdV}
\partial_tu+\alpha\partial_x^3u+\beta\partial_x^5u+u\partial_xu=0,
\end{equation}
which models the wave propagation in one direction. While the KP
equation models the propagation along the x-axis of a nonlinear
dispersive long wave on the surface of a fluid with a slow variation
along the y-axis (see \cite{KadPet,SauTzv1,SauTzv2} and the
references therein).

We begin with a few facts about KP equations. The Fourier transform
of a Schwarz function $f(x,y)$ is defined by
$$\hat{f}(\xi,\mu)=\frac{1}{2\pi}\int_{\mathbb R^2}f(x,\,y)e^{-i(x\xi+y\mu)}dxdy.$$
The dispersive function of the KP equation is
\begin{equation}\label{KPsign}
\omega(\xi,\,\mu)=\beta\xi^5-\alpha\xi^3+\frac{\mu^2}{\xi}.
\end{equation}
The analysis of the IVP of the KP equation depends crucially on the
sign of $\alpha$ and $\beta$.  We take a glance on the case
$\beta=0$. In this case, equation (\ref{5KP}) turns out to be the
third order KP equation. Without loss of generality, we assume
$|\alpha|=1$. If $\alpha=-1$, the equation is called the third order
KP-I equation. While if $\alpha=1$, the equation is called the third
order KP-II equation. By computing the gradient of $\omega$, we get
that for the third order KP-I
\begin{equation}\label{3KP-1sign}
|\nabla\omega(\xi,\mu)|=\Big|\Big(3\xi^2-\frac{\mu^2}{\xi^2},\,2\frac{\mu}{\xi}\Big)\Big|\gtrsim|\xi|.
\end{equation}
For the third order KP-II equation, we have
\begin{equation}\label{3KP-2sign}
|\nabla\omega(\xi,\mu)|=\Big|\Big(-3\xi^2-\frac{\mu^2}{\xi^2},\,2\frac{\mu}{\xi}\Big)\Big|\gtrsim|\xi|^2.
\end{equation}
One can easily recover a full derivative smoothness along the $x$
direction by (\ref{3KP-2sign}), but only a half derivative
smoothness by (\ref{3KP-1sign}). Since the nonlinear term in the
third order KP equation involves a full derivative along the $x$
direction, this explains partially to get the well-posedness for the
IVP of KP-I is much more difficult than that of KP-II.

Another important concept in the analysis of dispersive equation is
the resonance function. Still considering the third order KP
equation, the resonance function is defined by
\begin{equation*}
\begin{split}
R(\xi_1,\xi_2,\mu_1,\mu_2)&=\omega(\xi_1+\xi_2,\mu_1+\mu_2)-\omega(\xi_1,\mu_1)-\omega(\xi_2,\mu_2)\\
&=-\frac{\xi_1\xi_2}{(\xi_1+\xi_2)}\left(3\alpha(\xi_1+\xi_2)^2+
\Big(\frac{\mu_1}{\xi_1}-\frac{\mu_2}{\xi_2}\Big)^2\right).
\end{split}
\end{equation*}
Thus for the third order KP-II equation, we always have the
following inequality
\begin{equation}\label{3KP-2smooth}
|R(\xi_1,\xi_2,\mu_1,\mu_2)|\geq C|\xi_1||\xi_2||\xi_1+\xi_2|.
\end{equation}
However, for the third order KP-I equation, the inequality
(\ref{3KP-2smooth}) is not true all the time. In this case, resonant
interaction happens frequently. The resonant interaction means the
resonance function is zero or close to zero. Generally, we use
(\ref{3KP-2smooth}) to recover the derivative on $x$ by the
regularity on $t$. Thus, the simpler the corresponding zero set, the
easier it is to deal with the problem. This facts also implies that
the well-posedness problem of KP-II is easier than that of KP-I.

A natural function space to consider the well-posedness of the IVP
of the KP equation is the non-isotropic Sobolev space:
\begin{equation}
H^{s_1,s_2}(\mathbb{R}^2):=\Big\{f\in
\mathcal{S}^\prime(\mathbb{R}^2);\big\|<\xi>^{s_1}<\mu>^{s_2}\hat{f}\big\|_{L^2_{\xi,\mu}}<\infty\Big\},
\end{equation}
where $<\xi>=(1+|\xi|).$ Keep in mind that we are still in the case
of $\beta=0$. A scaling argument (e.g. see \cite{SauTzv1}) shows
that $s_1+2s_2>-\frac{1}{2}$ is expected for the local
well-posedness of the IVP of the KP equations in $H^{s_1,s_2}$. As
we pointed out, the third order KP-II has better dispersive effect
than the third order KP-I. The results about the third order KP-II
are very close to the expected indices. In \cite{Bou}, Bourgain
showed the global well-posedness of the third order KP-II in $L^2$,
i.e. $s_1=s_2=0$. This result had been improved by Takaoka and
Tzvetkov \cite{TakTzv} and Isaza and Mej\'ia \cite{IsaMej} to
$s_1>-\frac{1}{3},\, s_2\geq 0$. In \cite{Tak}, Takaoka obtained the
local well-posedness of the IVP of the third order KP-II for
$s_1>-\frac{1}{2},\,s_2=0$ and an additional low frequency condition
$|D_x|^{-\frac{1}{2}+\varepsilon}u_0\in L^2$. Recently, Hadac
\cite{Had} removed the additional condition on the initial value
above. This means in the case $s_2=0$, the result on the third order
KP-II equation is sharp. While for the third order KP-I equation,
the situation is far from the expected. By compactness method,
I\'orio and Nunes \cite{IonNun} obtained the local well-posednes of
the IVP of the third KP-I equation for data in the normal Sobolev
space $H^s(\mathbb{R}^2),\,s>2$ and satisfying a ``zero-mass"
condition. They used only the divergence form of the nonlinearity
and the skew-adjointness of the (linear) dispersion operator. The
condition on $s$ is needed to control the gradient of the solution
in the $L^\infty.$

Another natural space to consider the well-posedness of the IVP of
the KP-I equation is the Energy space. We first notice that the KP
equation (\ref{5KP}) satisfies the following two conversations.
\begin{description}
  \item[Mass] \begin{equation}\label{Mass}\|u\|_{L^2}=\|u_0\|_{L^2}.\end{equation}
  \item[Hamiltonian]\begin{equation}
\label{Hamilton}\begin{split}H(u)=&\frac{\beta}{2}\int(\partial_x^2
u)^2dxdy-\frac{\alpha}{2}\int(\partial_x u)^2dxdy\\
&+\frac{1}{2} \int(\partial_{x}^{-1}\partial_y
u)^2dxdy+\frac{1}{6}\int u^3 dxdy=H(u_0).
\end{split}
\end{equation}
\end{description}
Combining the above two conversations together, we can define the
Energy space for the fifth order KP-I equation ($\beta= 1$) by
\begin{equation}\label{5energy}
E(5th)=\Big\{u\in\mathcal{S}^\prime(\mathbb{R}^2);\|u\|_{E(5th)}=\big\|(1+|\xi|^2+|\xi|^{-1}|\mu|)
\hat{u}(\xi,\eta)\big\|_{L^2}<\infty\Big\}.
\end{equation}
For the third order KP-I equation ($\beta=0,\alpha=-1$), the Energy
space can be defined by
\begin{equation}\label{3energy}
E(3th)=\Big\{u\in\mathcal{S}^\prime(\mathbb{R}^2);\|u\|_{E(3th)}=\big\|(1+|\xi|+|\xi|^{-1}|\mu|)
\hat{u}(\xi,\eta)\big\|_{L^2}<\infty\Big\}.
\end{equation}
On these function spaces, we can prove that for $\beta=1,$
\begin{equation}\label{eq:1.1}\|u(t)\|_{E(5th)}\leq
C\|u_0\|_{E(5th)},\end{equation} and for $\beta=0,\alpha=-1$
\begin{equation}\label{eq:1.2}\|u(t)\|_{E(3th)}\leq
C\|u_0\|_{E(3th)},\end{equation} for any sufficiently smooth
solution $u$ of KP-I equation, uniformly in time (see also
\cite{Col,SauTzv2}). Thus it would be expected to obtain local
well-posedness in this kind of spaces. But the recent results of
Molinet, Saut and Tzvetkov \cite{MolSauTzv1,MolSauTzv2} showed that,
for the third order KP-I $(\beta=0, \,\alpha<0)$, one cannot prove
local well-posedness in any type of non-isotropic $L^2-$based
Sobolev space $H^{s_1,s_2}$, or in the energy space (see also
\cite{KocTzv}), by applying Picard iteration to the integral
equation formulation of the third order KP-I equation. To avoid the
difficulty, one must abandon Picard iteration or find out an
alternative space with similar regularity with $H^{s_1,s_2}$ or
energy space. Recently, Colliander, Ionescu, Kenig and Staffilani
\cite{ColKenSta3} set up the local well-posedness of the IVP of the
third order KP-I equation with small data in the intersection of
energy space $E$ and weighted space $P$ defined by
\begin{equation}\label{3KP-1data}
E=\{f:f\in L^2,\partial_x f\in L^2,\partial_x^{-1}\partial_y f\in
L^2\},\text{ and }P=\{f: (y+i)f\in L^2\}.
\end{equation}
Kenig \cite{Ken} established the global well-posedness of the IVP of
the third order KP-I equation in the following function space
$$Z_0=\Big\{u\in L^2(\mathbb{R}^2):\|u\|_{L^2}+\|\partial_x^{-1}\partial_y u\|_{L^2}
+\|\partial_x^2
u\|_{L^2}+\|\partial_x^{-2}\partial_y^2u\|_{L^2}<\infty\Big\}.$$ As
far as we know, the best well-posedness result of the third KP-I
equation is due to Ionescu, Kenig and Tataru \cite{IonKenTat}. They
set up the globall well posedness of the third order KP-I equation
in the $E(3th)$ space. Thus a more interesting question is to set up
the global well-posedness of the third order KP-I equation in $L^2$.
It is still open.

We now turn our attention back to the fifth order KP-I equation.
Without loss of the generality, we may assume that $\beta=1$ from
now on. The fifth order equation has a higher dispersive term than a
third order KP equation, which helps us to obtain some better
results than the third order KP equation. As before, we first
consider the dispersive function of the fifth order KP equation.
Since there is an interaction between the third order dispersive
term and the fifth order dispersive term, we can not get a
dispersive smoothing effect as (\ref{3KP-1sign}) or
(\ref{3KP-2sign}) for all $(\xi,\mu)\in\mathbb{R}^2$, but we still
have
\begin{equation}\label{5KP-1sign}
|\nabla\omega(\xi,\mu)|=\Big|\Big(5\xi^4+\alpha3\xi^2-\frac{\mu^2}{\xi^2},\,2\frac{\mu}{\xi}\Big)\Big|\gtrsim
|\xi|^2, \text{ if } |\xi|^2>|\alpha|.
\end{equation}
This inequality can help us to recover a full derivative which is
important in the analysis of the fifth order KP-I equation. We also
consider the resonance function
\begin{equation}\label{5KP-Iresonance}
\begin{split}
R(\xi_1,\xi_2,\mu_1,\mu_2)&=\omega(\xi_1+\xi_2,\mu_1+\mu_2)-\omega(\xi_1,\mu_1)-\omega(\xi_2,\mu_2)\\
&=\frac{\xi_1\xi_2}{(\xi_1+\xi_2)}\left((\xi_1+\xi_2)^2\Big[5(\xi_1^2+\xi_1\xi_2+\xi_2^2)-3\alpha\Big]-
\Big(\frac{\mu_1}{\xi_1}-\frac{\mu_2}{\xi_2}\Big)^2\right).
\end{split}
\end{equation}
The first result of the fifth order KP-I equation in the context of
energy space is due to Saut and Tzvetkov \cite{SauTzv2}. They
obtained the locall well-posedness for the fifth order KP-I equation
with data satisfying
$$\|u_0\|_{L^2}+\||D_x|^s u_0\|_{L^2}+\||D_y|^ku_0\|<\infty, \, s\geq 1,\,k\geq 0,\,
|\xi|^{-1}\hat{u_0}(\xi,\mu)\in \mathcal{S}^\prime(\mathbb{R}^2).$$
Here $|D_x|^su_0=(|\xi|^s\hat{u_0})^\vee.$ They also set up the
global well-posedness for the data satisfies $u_0\in L^2$ and
$H(u_0)<\infty$. Recently, Ionescu and Kenig \cite{IonKen} got the
global well-posedness for the IVP of the fifth order periodic KP-I
equation absenting the third order dispersive term with the initial
data in $E(5th)$. For the IVP of the fifth order KP-II equation,
Saut and Tzvetkov \cite{SauTzv2} also obtained the global
well-posedness for the initial data in $L^2$. And they put forward
an open problem whether one can get the local and global
well-posedness of the IVP of the fifth order KP-I equation with the
initial data in $L^2$.

To connect the known results with the $L^2$ conjecture, we introduce
the function space $E_s$ consisting of all the functions satisfying
$$\|f\|_{s}=:\|f\|_{E_s}=\Big\|\Big(1+|\xi|^2+\frac{|\mu|}{|\xi|}\Big)^s
\hat{f}(\xi,\mu)\Big\|_{L^2}<\infty, \forall s\in \mathbb{R}.$$ It
is easy to see when $s=0$, $E_0=L^2$, and when $s=1$, $E_1=E(5th)$.
To get the low regularity of the KP equation, we need a careful
analysis on the time-spatial spaces. In this case, Bourgain type
space is needed. Below, we may abuse $\hat{f}$ as the Fourier
transform of a function in $(x,y)$ or $(x,y,t)$.  One may figure it
out in the context.
\begin{definition}\label{def2}
Let
$\chi_0(\tau-\omega(\xi,\mu))=\chi_{[0,1]}(|\tau-\omega(\xi,\mu)|)$,
$\chi_j(\tau-\omega(\xi,\mu))=\chi_{[2^{j-1},2^{j}]}(|\tau-\omega(\xi,\mu)|)$
for $j\in\mathbb{N}$. For $s,b\in\mathbb{R}$, we define the space
$X_{s,b}$ through the following norm:
\begin{equation} \label{Bourgain}
\|f\|_{X_{s,b}}=\sum_{j\geq
0}2^{jb}\Big\|\chi_j(\tau-\omega(\xi,\mu))(1+|\xi|^2+\frac{|\mu|}{|\xi|})^s\hat{f}(\xi,\mu,\tau)\Big\|_{L^2}.
\end{equation}
\end{definition}

Furthermore, for an interval $I\subset\mathbb{R}$ the localized
Bourgain space $X_{s,b}(I)$ can be defined via requiring
$$
\|u\|_{X_{s,b}(I)}=\inf_{w\in X_{s,b}}\big\{\|w\|_{X_{s,b}}:\quad
w(t)=u(t)\quad \text{on\quad interval}\quad I\big\}.
$$
We now state the well-posedness result in $X_{s,b}$ with initial
data in $E_s$.
\begin{theorem}\label{th:2}
Assume that $\beta=1,\alpha\in\mathbb{R}$, and $1\geq s>0$. For any
real valued function $u_0\in E_s$, there exist $T=T(\|u_0\|_{E_s})$
and a unique solution u of (\ref{5KP}) in $X_{s,\frac{1}{2}+}(I)$
with $I=[-T,T]$. Moreover the map $u_0\rightarrow u$ is smooth from
$E_s$ to $X_{s,\frac{1}{2}+}(I)$. By Sobolev embedding, we have
$u\in C([-T,T];E_s).$ Here $\frac{1}{2}+>\frac{1}{2}$ and is as
close as possible to $\frac{1}{2}$.
\end{theorem}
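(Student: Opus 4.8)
The plan is to solve (\ref{5KP}) by Picard iteration in the Bourgain space $X_{s,\frac{1}{2}+}$, so that the whole theorem reduces to one bilinear estimate together with the standard linear estimates. First I would pass to the Duhamel formulation
\[
u(t)=\psi(t/T)\,W(t)u_0-\psi(t/T)\int_0^t W(t-t')\,\tfrac{1}{2}\partial_x\big(u^2\big)(t')\,dt',
\]
where $W(t)=\exp\!\big(-t(\alpha\partial_x^3+\partial_x^5+\partial_x^{-1}\partial_y^2)\big)$ is the linear propagator, whose Fourier symbol is $e^{it\omega(\xi,\mu)}$ with $\omega$ as in (\ref{KPsign}), and $\psi$ is a smooth time cutoff. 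The homogeneous and inhomogeneous estimates
\[
\|\psi(t/T)W(t)u_0\|_{X_{s,\frac{1}{2}+}}\lesssim\|u_0\|_{E_s},\qquad
\Big\|\psi(t/T)\!\int_0^t\! W(t-t')F\,dt'\Big\|_{X_{s,\frac{1}{2}+}}\lesssim T^{\theta}\|F\|_{X_{s,-\frac{1}{2}+}}
\]
for some $\theta>0$ are routine consequences of the definition of $X_{s,b}$, the gain $T^\theta$ coming from the gap between $\frac{1}{2}+$ and $\frac{1}{2}-$. Thus everything rests on controlling $u\mapsto\partial_x(u^2)$.

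The heart of the matter is the bilinear estimate
\[
\|\partial_x(uv)\|_{X_{s,-\frac{1}{2}+}}\lesssim\|u\|_{X_{s,\frac{1}{2}+}}\|v\|_{X_{s,\frac{1}{2}+}}.
\]
I would establish this by duality against a function $w$ and dyadic decomposition: expand $u,v,w$ into pieces localized to $|\xi|\sim N_i$ in frequency and $|\tau-\omega|\sim L_i$ in modulation, reducing the claim to a weighted bound on the trilinear form $\int\widehat{u}\,\widehat{v}\,\widehat{w}$ over the set $\tau_1+\tau_2=\tau$, $\xi_1+\xi_2=\xi$, $\mu_1+\mu_2=\mu$. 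On this set the identity $\sum_i(\tau_i-\omega_i)=R(\xi_1,\xi_2,\mu_1,\mu_2)$ forces $\max_i L_i\gtrsim|R|$, with $R$ given by (\ref{5KP-Iresonance}). Wherever $|R|$ is large one transfers that gain into spatial derivatives through the modulation weights, directly absorbing the $\partial_x$ that sits in front of the nonlinearity.

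The main obstacle is the resonant region where $R$ is small, which for KP-I genuinely occurs because of the minus sign in (\ref{5KP-Iresonance}), in sharp contrast to the clean lower bound (\ref{3KP-2smooth}) available for KP-II. Here there is no help from the resonance, so I would split according to the size of $|\xi|$ relative to $|\alpha|$. In the range $|\xi|^2>|\alpha|$ the fifth order dispersive smoothing (\ref{5KP-1sign}), $|\nabla\omega|\gtrsim|\xi|^2$, yields an $L^2_{x,t}$ local smoothing estimate that recovers a full $x$-derivative, exactly matching the loss in $\partial_x(uv)$; the partner factors are then placed, via H\"older, into the anisotropic (dyadically decomposed) Strichartz norm and the maximal-function norm of type $L^\infty_xL^2_{yt}$, which is where the high--high and high--low interactions are absorbed. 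The complementary low-frequency range $|\xi|^2\le|\alpha|$, together with the singularity of $\partial_x^{-1}\partial_y^2$ at $\xi=0$, is controlled by the weight $|\mu|/|\xi|$ built into the $E_s$ and $X_{s,b}$ norms, which suppresses the resonant contribution there. Once this bilinear bound is secured, choosing $T$ so small that $CT^\theta\|u_0\|_{E_s}<1$ makes the Duhamel map a contraction on a ball in $X_{s,\frac{1}{2}+}(I)$; the fixed point is the unique solution, analyticity of the iteration gives smoothness of $u_0\mapsto u$, and the embedding $X_{s,\frac{1}{2}+}\hookrightarrow C(I;E_s)$ furnishes the stated time-continuity.
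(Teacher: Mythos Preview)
Your overall architecture is the same as the paper's: Duhamel formulation, linear estimates in $X_{s,b}$, the key bilinear estimate $\|\partial_x(uv)\|_{X_{s,-\frac12+}}\lesssim\|u\|_{X_{s,\frac12+}}\|v\|_{X_{s,\frac12+}}$ via duality and dyadic decomposition, and contraction on a small ball. So at the level of strategy there is no gap.

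Two points of divergence from the paper's execution are worth flagging. First, the small power of $T$: you extract $T^\theta$ from the inhomogeneous linear estimate via the gap between $b=\tfrac12+$ and $b-1=-\tfrac12+$, whereas the paper's Proposition~\ref{prop1} carries no such factor; instead the paper inserts $T^\sigma$ directly into the bilinear estimate (Theorem~\ref{biestimate}) through Proposition~\ref{prop6}, exploiting the compact time support of $u,v$. Either device works. Second, and more substantively, your treatment of the resonant region invokes a Kato-type $L^2_{x,t}$ local smoothing estimate paired with an $L^\infty_xL^2_{yt}$ maximal function, in the Kenig--Ponce--Vega style. The paper does not set up those mixed norms explicitly; rather, it works inside the trilinear integral and performs changes of variables (equations~(\ref{eq:3.5}) and~(\ref{eq:3.7})) whose Jacobians $J_\mu,J_\xi$ encode exactly the gradient bound $|\nabla\omega|\gtrsim|\xi|^2$ you cite, together with a case split on whether $|J_\mu|$ is bounded below. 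The paper's maximal estimate (Proposition~\ref{prop5}) is of type $L^2_tL^\infty_{x,y}$, not $L^\infty_xL^2_{yt}$, and is used only for very low frequency pieces. These are genuinely different packagings of the same dispersive information; your route is closer to the KdV template, the paper's is a direct bilinear-smoothing computation. Both should close, but be aware that the paper's case analysis (Low--Low, High--High, High--Low, with many subcases depending on the relative sizes of $|\xi_i|^2$ and $|\mu_i|/|\xi_i|$) is considerably more intricate than your sketch suggests, and the weight $|\mu|/|\xi|$ in $E_s$ is used throughout, not only at low $|\xi|$.
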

By (\ref{eq:1.1}) and Theorem \ref{th:2}, we can recover the global
well-posedness of the IVP of the fifth order KP-I equation in the
energy space:
\begin{theorem}\label{global}(see also \cite{SauTzv2})
Assume that $\beta=1$, $\alpha\in\mathbb{R}$, $s=1$. For any real
valued $u_0\in E_1$, there exists a unique solution of the IVP of
the fifth order KP-I equation
$$u\in C(\mathbb{R}, E_1).$$
\end{theorem}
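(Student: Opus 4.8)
The plan is to globalize the local solution provided by Theorem \ref{th:2} at $s=1$ by combining it with the uniform energy bound (\ref{eq:1.1}), through the standard continuation argument: since the local existence time furnished by Theorem \ref{th:2} depends only on $\|u_0\|_{E_1}$, and the $E_1$-norm of the solution stays bounded for all time, one may reapply the local theory on successive intervals of uniform length to cover all of $\mathbb{R}$. Here we recall that $E_1 = E(5th)$, so that Theorem \ref{th:2} and (\ref{eq:1.1}) refer to the same space.

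First I would fix a real-valued $u_0 \in E_1$. By Theorem \ref{th:2} there is a time $T_0 = T(\|u_0\|_{E_1}) > 0$ and a unique solution $u \in X_{1,\frac12+}([-T_0,T_0]) \subset C([-T_0,T_0];E_1)$. The crux of the continuation is that the $E_1$-norm cannot escape to infinity, and this is precisely the content of (\ref{eq:1.1}), which yields $\|u(t)\|_{E_1} \leq C\|u_0\|_{E_1}$ with $C$ independent of $t$. At the analytic level this bound rests on the mass conservation (\ref{Mass}) and the Hamiltonian conservation (\ref{Hamilton}): the square of the $E_1$-norm controls $\|u\|_{L^2}^2 + \|\partial_x^2 u\|_{L^2}^2 + \|\partial_x^{-1}\partial_y u\|_{L^2}^2$, the two positive quadratic terms $\frac12\int(\partial_x^2 u)^2$ and $\frac12\int(\partial_x^{-1}\partial_y u)^2$ of $H$ reproduce two of these pieces, and the remaining lower-order terms $-\frac{\alpha}{2}\int(\partial_x u)^2$ and $\frac16\int u^3$ are absorbed using the interpolation bound $\|\partial_x u\|_{L^2}^2 \lesssim \|u\|_{L^2}\|\partial_x^2 u\|_{L^2}$ together with an anisotropic Gagliardo--Nirenberg inequality for $\|u\|_{L^3}^3$; mass conservation then freezes $\|u\|_{L^2}$ and closes the estimate into a time-uniform bound.

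Because (\ref{eq:1.1}) is justified only for sufficiently smooth solutions, I would next transfer it to the $E_1$ level by approximation: truncate $u_0$ in frequency to obtain smooth data $u_0^n \to u_0$ in $E_1$, apply (\ref{eq:1.1}) to the corresponding smooth solutions $u^n$, and pass to the limit by invoking the smoothness, hence continuity, of the data-to-solution map $u_0 \mapsto u$ asserted in Theorem \ref{th:2}. This produces $\|u(t)\|_{E_1} \leq C\|u_0\|_{E_1}$ for the genuine $E_1$-solution, uniformly on its interval of existence.

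Finally I would run the continuation. Set $M = C\|u_0\|_{E_1}$ and let $\delta = T(M) > 0$ be the local existence time from Theorem \ref{th:2} valid for any datum of $E_1$-norm at most $M$. Starting from $u_0$ and advancing in steps of length $\delta$, at each restart time the solution has $E_1$-norm at most $M$, so the next step again exists for a time at least $\delta$; iterating forward and backward in time exhausts all of $\mathbb{R}$ and yields $u \in C(\mathbb{R};E_1)$, with uniqueness following from the local uniqueness of Theorem \ref{th:2} on each subinterval. The main obstacle is the approximation step: one must verify that the smooth solutions $u^n$ exist long enough and that the convergence $u^n \to u$ is strong enough in $X_{1,\frac12+}$ to pass (\ref{eq:1.1}) to the limit, which is exactly where the dependence of the existence time on the $E_1$-norm alone and the smoothness of the solution map from Theorem \ref{th:2} are indispensable.
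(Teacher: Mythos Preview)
Your proposal is correct and follows exactly the approach the paper indicates: the paper does not give a detailed proof of Theorem \ref{global} but simply states that it follows from the a priori bound (\ref{eq:1.1}) together with the local result Theorem \ref{th:2}. Your write-up is a careful fleshing-out of precisely this standard continuation argument, including the approximation step needed to transfer (\ref{eq:1.1}) from smooth solutions to $E_1$ data via the continuous dependence in Theorem \ref{th:2}.
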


\begin{re} Even though the conjecture that the global well-posedness
for the IVP of the fifth order KP-I equation with data in $L^2$ is
still open, it seems the function space $E_s$ will be expected to
consider this open problem. Since $E_s$ contains the specific
feature $(1+|\xi|^2+|\mu||\xi|^{-1})$ of KP-I equation, and is
different from the Sobolev space $H^{s_1,s_2}$ or $H^s$, we have
independent interest in obtaining the global or local well-posedness
of the IVP of the fifth order KP-I equations in $E_s$ for $s\in
\mathbb R$.

\end{re}

\begin{re}In our argument, dyadic Strichartz estimates are essential. Especially,
when we dispose the ``high-low" interaction in the bilinear
estimate, a low order derivative on the low frequency part is
needed. In this case, $s>0$ is necessary.
\end{re}

Our main argument to prove Theorem \ref{th:2} is to set up a
bilinear estimate as in Section 3 below. Recently, Colliander,
Ionescu, Kenig and Staffilani \cite{ColKenSta3} discovered a
conterexample which showed that one could not set up a similar
bilinear estimate in the Bourgain type space in the third KP-I case.
But we find their conterexample does not work in our case, since the
fifth order dispersive function can help us to recover a full
derivative. Also, we do not recourse to the weighted space. In
\cite{ColKenSta3}, a weighted space is also used to dispose the case
when the very high and very low frequency interaction happens. In
our paper, we can overcome this difficulty by the fifth order
smoothing effect and the dyadic decomposed Strichartz estimate.

In the rest of the paper we would like to use the notation
$A\lesssim B$ if there exists a constant $C>0$ which does not depend
on $B$ such that $A\leq CB.$ If $C<\frac{1}{100}$, we would like to
use $A\ll B$. If there exist $c$ and $C$ which are
$\frac{1}{100}<c<C<100,$ such that $cA\leq B\leq CA$, the notation
$A\sim B$ will be used. And the constants $c$ and $C$ will be
possibly different from line to line.

This paper is organized as follows. In Section 2, we present some
results on linear KP equation and some useful estimates. In Section
3, we present the bilinear estimate which is crucial to set up our
locall well-posedness. In Section 4, we finish the proof of Theorem
\ref{th:2}.

\section{The Linear Estimates}
We begin with the IVP of linear KP equation
\begin{eqnarray}\label{L5KP}
\left\{\begin{array}{ll}
\partial_tu+\alpha\partial_x^3u+\partial_x^5u+\partial_x^{-1}\partial_y^2u=0, \\
u(0,x,y)=u_0(x,y),\quad\quad(x,y)\in\mathbb{R}^2.\end{array}\right.\end{eqnarray}
By Fourier transform, the solution of (\ref{L5KP}) can be defined
as,
$$u=S(t)u_0(x,y)=\int_{\mathbb{R}^2}e^{i(x\xi+y\mu+t\omega(\xi,\mu))}\widehat{u_0}(\xi,\mu)d\xi
d\mu.$$ By Duhamel's formula, (\ref{5KP}) can be reduced to the
integral formulation.
\begin{eqnarray}\label{eq:1}
u(t)=S(t)u_0-\frac 12 \int_0^tS(t-t')\partial_x(u^2(t'))dt'.
\end{eqnarray}
Indeed, to get the local existence result, we apply the fixed point
argument to the nonlinear map defined as the right hand side of the
following integral equation.
\begin{eqnarray}\label{eq:2}
u(t)=\psi(t)\Big[S(t)u_0-\frac{1}{2}
\int_0^tS(t-t')\partial_x(\psi_T^2(t')u^2(t'))dt'\Big],
\end{eqnarray}
where $t\in {\Bbb R}$ and, $\psi$ is a time cut-off function
satisfying
\begin{equation}\label{bump}
\psi\in C_0^\infty({\Bbb R}),\,{\rm supp}\,\psi\subset [-2,\,2],\,
\psi\equiv 1\,{\rm on}\,[-1,\,1],
\end{equation}
 and $\psi_T(\cdot)=\psi(\cdot/T)$.

To run the fixed point argument, we first set up the following
homogeneous and inhomogeneous linear estimates.
\begin{prop}\label{prop1}
Assume $\psi\in C^\infty$ as above and $s\in\mathbb{R}$,
$\frac{1}{2}\leq b< 1$, then
\begin{equation}\label{homo}
\|\psi(t) S(t)u_0\|_{X_{s,b}}\leq C\|u_0\|_{E_s}.
\end{equation}
\begin{equation}\label{inhomo}
\Big\|\psi(t) \int_0^tS(t-t')h(t')dt'\Big\|_{X_{s,b}}\leq
C\|h\|_{X_{s,b-1}}.
\end{equation}
\end{prop}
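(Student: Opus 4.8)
The plan is to prove both estimates (\ref{homo}) and (\ref{inhomo}) by reducing everything to the definition of the $X_{s,b}$ norm via the Fourier transform, since the linear group $S(t)$ acts diagonally in frequency. These are standard Bourgain-space estimates, and the only structural feature of the fifth order KP-I equation that enters is the dispersion relation $\omega(\xi,\mu)$ through the localization functions $\chi_j(\tau-\omega(\xi,\mu))$; the proof itself is insensitive to the precise form of $\omega$.

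For the homogeneous estimate (\ref{homo}), I would first compute the space-time Fourier transform of $\psi(t)S(t)u_0$. Since $\widehat{S(t)u_0}(\xi,\mu)=e^{it\omega(\xi,\mu)}\widehat{u_0}(\xi,\mu)$, multiplying by $\psi(t)$ and taking the Fourier transform in $t$ gives $\widehat{\psi(t)S(t)u_0}(\xi,\mu,\tau)=\widehat{\psi}(\tau-\omega(\xi,\mu))\,\widehat{u_0}(\xi,\mu)$. The $X_{s,b}$ norm then becomes $\sum_{j\geq 0}2^{jb}\big\|\chi_j(\tau-\omega)(1+|\xi|^2+|\mu|/|\xi|)^s\widehat{\psi}(\tau-\omega)\widehat{u_0}\big\|_{L^2}$. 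I would carry out the $\tau$-integration first: for fixed $(\xi,\mu)$, the substitution $\sigma=\tau-\omega(\xi,\mu)$ decouples the modulation variable from the spatial weight, so that $\|\chi_j(\sigma)\widehat{\psi}(\sigma)\|_{L^2_\sigma}$ factors out. The crucial point is that $\psi$ is a Schwartz function, so $\widehat{\psi}$ decays rapidly; hence $\sum_{j\geq 0}2^{jb}\|\chi_j\widehat{\psi}\|_{L^2_\sigma}\lesssim \sum_{j\geq 0}2^{jb}2^{j/2}\sup_{|\sigma|\sim 2^j}|\widehat{\psi}(\sigma)|$, which converges for any fixed $b<1$ because of the rapid decay of $\widehat{\psi}$. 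What remains is exactly $(1+|\xi|^2+|\mu|/|\xi|)^s\widehat{u_0}$ in $L^2_{\xi,\mu}$, which is $\|u_0\|_{E_s}$.

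For the inhomogeneous estimate (\ref{inhomo}), the standard approach is to expand the Duhamel term in frequency and compare the modulation of the output with that of the forcing $h$. Writing $a(\xi,\mu)=\widehat{h}(\xi,\mu,\cdot)$ in the time-frequency variable, one obtains the space-time Fourier transform of $\psi(t)\int_0^t S(t-t')h(t')\,dt'$ as a superposition of two pieces: a ``resonant'' contribution where the modulation $\tau-\omega$ of the output matches that of $h$, and a ``non-resonant'' tail controlled by the rapid decay of $\widehat{\psi}$. The algebraic identity I would use is that, after the Fourier transform, the Duhamel operator produces a kernel of the form $[\widehat{\psi}(\tau-\omega) - \widehat{\psi}(\tau-\omega')]/(\tau'-\omega)$ type expressions (with $\tau'$ the modulation of $h$), and one splits the region $|\tau-\omega|\lesssim 1$ from $|\tau-\omega|\gtrsim 1$. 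On the latter region one gains a full power of $(1+|\tau-\omega|)^{-1}$, which is exactly what converts the $X_{s,b-1}$ norm of $h$ into the $X_{s,b}$ norm of the output; on the former region, the boundedness is immediate since the modulation is bounded. Summing the dyadic pieces again uses $\frac12\leq b<1$ to guarantee both convergence of the low-modulation sum and the gain $b-1<0$ on the high-modulation sum.

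The main obstacle, and the only place requiring genuine care, is the dyadic summation built into the norm (\ref{Bourgain}): because the $X_{s,b}$ norm here is defined as an $\ell^1$-sum over dyadic modulation blocks $2^{jb}\|\chi_j(\cdots)\|_{L^2}$ rather than the more familiar $\ell^2$-weighted $L^2$ norm $\|(1+|\tau-\omega|)^b\widehat{f}\|_{L^2}$, I must track the summability explicitly at each step rather than absorbing it into a single weight. The constraint $b<1$ is precisely what keeps the geometric-type series $\sum_j 2^{j(b-1)}$ convergent in the inhomogeneous estimate, while $b\geq\frac12$ is what the later bilinear estimate will need; both endpoints are respected here. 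I expect no difficulty from the dispersion relation $\omega$ itself, since it never enters beyond the change of variables $\sigma=\tau-\omega(\xi,\mu)$, which is measure-preserving in $\tau$ for each fixed $(\xi,\mu)$.
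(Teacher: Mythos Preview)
Your proposal is correct and follows essentially the same route as the paper: the homogeneous estimate is handled exactly as you describe (compute the Fourier transform, change variables $\sigma=\tau-\omega$, use the rapid decay of $\widehat\psi$ to sum the dyadic pieces), and for the inhomogeneous estimate the paper implements precisely the low/high-modulation split you outline, making the low-modulation piece concrete by Taylor-expanding $(e^{it\tau}-e^{it\omega})/(\tau-\omega)$ on the support of $\psi(\tau-\omega)$ and reducing it to the homogeneous case, while the high-modulation piece gains the factor $(\tau-\omega)^{-1}$ that converts $X_{s,b-1}$ into $X_{s,b}$.
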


\begin{proof}
We observe that
\begin{equation}\label{eq:2.2}
(\psi(t)S(t)u_0)\hat{\quad}(\xi,\mu,\tau)=\hat{\psi}(\tau-\omega(\xi,\mu))\widehat{u_0}(\xi,\mu).
\end{equation}
To prove (\ref{homo}), we need to estimate the following integral
expression:
\begin{equation}\label{eq:2.3}
\sum_{j\geq 0}2^{jb}\Big(\int_{\mathbb
R^3}w(\xi,\mu)^{2s}\chi_j(\tau-\omega)|\hat{\psi}(\tau-\omega)|^2|\widehat{u_0}|^2d\xi
d\mu d\tau\Big)^{\frac{1}{2}},
\end{equation}
where $w(\xi,\mu)=(1+|\xi|^2+\frac{|\mu|}{|\xi|})$. We observe that
for $j=0$
\begin{equation}\label{eq:2.4}
\int_{\mathbb
R}|\hat{\psi}(\lambda)|^2\chi_j(\lambda)d\lambda\lesssim\|\hat{\psi}\|^2_{L^\infty}
\end{equation}
and for $j\geq1$
\begin{equation}\label{eq:2.5}
\int_{\mathbb
R}|\hat{\psi}(\lambda)|^2\chi_j(\lambda)d\lambda\lesssim
2^j\frac{1}{(1+2^j)^{2N}}\|(1+|s|)^N\hat{\psi}(s)\|_{L^\infty}^2
\end{equation}
for any $N\in \mathbb N$. When we insert (\ref{eq:2.4}) and
(\ref{eq:2.5}) into (\ref{eq:2.3}) we obtain the bound
\begin{equation}\label{eq:2.6}
\|u_0\|_{E_s}\Big(\|\hat{\psi}\|_{L^\infty}+\sum_{j\geq1}\frac{2^{(\frac{1}{2}+b)j}}{(1+2^j)^{N}}
\|(1+|s|)^N\hat{\psi}(s)\|_{L^\infty}\Big).
\end{equation}
It is easy to see that for $N>2$,
$\displaystyle\sum_{j\geq1}\frac{2^{(\frac{1}{2}+b)j}}{(1+2^j)^N}\leq
C$, then (\ref{homo}) is proved.

To prove (\ref{inhomo}), we write
$$\psi(t)\int_0^tS(t-t')h(t')dt'=I+II,$$
where
$$I=\psi(t)\int_{-\infty}^\infty\int_{\mathbb R^2}e^{i(x\xi+y\mu)}\hat{h}(\xi,\mu,\tau)
\psi(\tau-\omega)\frac{e^{it\tau}-e^{it\omega}}{\tau-\omega(\xi,\mu)}d\xi
d\mu d\tau,$$ and
$$II=\psi(t)\int_{-\infty}^\infty\int_{\mathbb R^2}e^{i(x\xi+y\mu)}\hat{h}(\xi,\mu,\tau)
[1-\psi(\tau-\omega)]\frac{e^{it\tau}-e^{it\omega}}{\tau-\omega(\xi,\mu)}d\xi
d\mu d\tau.$$ By Taylor expansion we can write I as
\begin{equation}\label{eq:2.7}
I=\sum_{k=1}^\infty\frac{i^k}{k!}t^k\psi(t)\int_{\mathbb
R^2}e^{i(x\xi+y\mu+t\omega)}\Big(\int_{-\infty}^\infty
\hat{h}(\xi,\mu,\tau)(\tau-\omega)^{k-1}\psi(\tau-\omega)d\tau\Big)d\xi
d\tau.
\end{equation}
For $k\geq 1$, we write
$$t^k\psi(t)=\psi_k(t).$$
It is easy to show for  $s\in\mathbb R$,
$$|\widehat{\psi_k}(s)|\leq C,$$
and for any $|s|>1$,
$$|\widehat{\psi_k}(s)|\leq C\frac{(1+k)^2}{(1+|s|)^2}.$$
From (\ref{eq:2.7}) it is easy to see
$$I=\sum_{k=1}^\infty \frac{i^k}{k!}\psi_k(t)S(t)h_k(x,y),$$
where
$$\widehat{h_k}(\xi,\mu)=\int_{-\infty}^\infty\hat{h}(\xi,\mu,\tau)(\tau-\omega)^{k-1}\psi(\tau-\omega)d\tau.$$
Then by (\ref{homo}), we obtain
$$\|I\|_{X_{s,b}}\lesssim\sum_{k\geq1}\frac{(1+k)^2}{k!}\|h_k\|_{E_s}.$$
On the other hand, from the definition of $E_s$ and $X_{s,b}$, it is
easy to see that
$$\|h_k\|_{E_s}\lesssim\|h\|_{X_{s,b-1}}.$$
We now pass to $II$. We write $II=II_1+II_2$, where
$$II_1=\psi(t)\int_{-\infty}^\infty\int_{\mathbb
R^2}e^{i(x\xi+y\mu)}\hat{h}(\xi,\mu,\tau)
[1-\psi(\tau-\omega)]\frac{e^{it\tau}}{\tau-\omega(\xi,\mu)}d\xi
d\mu d\tau,$$
$$II_2=-\psi(t)\int_{\mathbb
R^2}e^{i(x\xi+y\mu)}\int_{-\infty}^\infty\hat{h}(\xi,\mu,\tau)
[1-\psi(\tau-\omega)]\frac{e^{it\omega}}{\tau-\omega(\xi,\mu)} d\tau
d\xi d\mu.$$ Again by the definition of $X_{s,b}$, we obtain
$$\|II_1\|_{X_{s,b}}\lesssim\|h\|_{X_{s,b-1}}.$$
By (\ref{homo}), we get
$$\|II_2\|_{X_{s,b}}\lesssim\|\tilde{h}\|_{E_s},$$
where
$$\widehat{\tilde{h}}(\xi,\mu)=\int_{-\infty}^\infty[1-\psi(\tau-\omega)]
\frac{\hat{h}(\xi,\mu,\tau)}{\tau-\omega}d\tau.$$ By the following
estimate
$$\|\tilde{h}\|_{E_s}\lesssim\|h\|_{X_{s,b-1}},$$
we finish the proof of Proposition \ref{prop1}.
\end{proof}

\begin{prop}\cite{BenSau}\label{prop2} Let $\delta(r)=2(\frac{1}{2}-\frac{1}{r}),\,2\leq
r<\infty.$ For any $0<T<1$, there exists $C$ independent of $T$ such
that
\begin{equation}\label{Strichartz}
\||D_x|^{\frac{\delta(r)}{2}}S(t)u_0(x,y)\|_{L^q_T(L^r_{(x,y)})}\leq
C\|u_0\|_{L^2_{(x,y)}},\quad\quad\frac{2}{q}=\delta(r).
\end{equation}
\end{prop}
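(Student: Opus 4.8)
The plan is to deduce the space-time bound (\ref{Strichartz}) from a single fixed-time dispersive estimate together with the $L^2$ conservation of $S(t)$, via the classical $TT^*$ machinery. Since the dispersion relation $\omega(\xi,\mu)=\xi^5-\alpha\xi^3+\mu^2/\xi$ is real, $S(t)$ is unitary on $L^2_{(x,y)}$, so $\|S(t)u_0\|_{L^2}=\|u_0\|_{L^2}$, which covers the endpoint $r=2$ ($\delta(r)=0$, $q=\infty$) at once. The endpoint I would prove is the decay estimate
\begin{equation*}
\||D_x|S(t)u_0\|_{L^\infty_{(x,y)}}\lesssim |t|^{-1}\|u_0\|_{L^1_{(x,y)}}.
\end{equation*}
Granting these, Stein's analytic interpolation applied to the family $|D_x|^zS(t)$ on the strip $0\le\mathrm{Re}\,z\le1$ (bounded $L^2\to L^2$ with norm $O(1)$ on $\mathrm{Re}\,z=0$, and $L^1\to L^\infty$ with norm $|t|^{-1}$ on $\mathrm{Re}\,z=1$, each with polynomial growth in $\mathrm{Im}\,z$) yields, at $z=\delta(r)$,
\begin{equation*}
\||D_x|^{\delta(r)}S(t)g\|_{L^r_{(x,y)}}\lesssim |t|^{-\delta(r)}\|g\|_{L^{r'}_{(x,y)}},\qquad \tfrac1r+\tfrac1{r'}=1 .
\end{equation*}
Writing $A(t)=|D_x|^{\delta(r)/2}S(t)$ and using unitarity, one has $A(t)A(s)^*=|D_x|^{\delta(r)}S(t-s)$, so the displayed bound is exactly $\|A(t)A(s)^*\|_{L^{r'}\to L^r}\lesssim|t-s|^{-\delta(r)}$. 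Because $0<\delta(r)<1$ for $2<r<\infty$, the Hardy--Littlewood--Sobolev inequality in the time variable converts this into the boundedness of the $TT^*$ operator, hence into (\ref{Strichartz}) with $\frac2q=\delta(r)$; restricting to $|t|\le T<1$ only improves the constants.

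The core is therefore the decay estimate, which I would prove by analyzing the kernel
\begin{equation*}
\mathcal{K}_t(x,y)=\int_{\mathbb R^2}|\xi|\,e^{i(x\xi+y\mu+t\omega(\xi,\mu))}\,d\xi\,d\mu
\end{equation*}
of $|D_x|S(t)$ and bounding $\|\mathcal K_t\|_{L^\infty_{(x,y)}}$. For fixed $\xi,t$ the phase in $\mu$, namely $y\mu+t\mu^2/\xi$, is quadratic, so completing the square and evaluating the resulting Fresnel integral produces a factor $C(|\xi|/|t|)^{1/2}e^{-iy^2\xi/(4t)}$. This collapses the two-dimensional integral to the one-dimensional oscillatory integral
\begin{equation*}
\mathcal K_t(x,y)=C|t|^{-1/2}\int_{\mathbb R}|\xi|^{3/2}\,e^{i\phi(\xi)}\,d\xi,
\qquad \phi(\xi)=a\xi+t\xi^5-\alpha t\xi^3,
\end{equation*}
where $a=x-y^2/(4t)$ absorbs the $x$ and $y$ dependence into a single real parameter.

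It remains to show $\big|\int|\xi|^{3/2}e^{i\phi(\xi)}\,d\xi\big|\lesssim|t|^{-1/2}$ uniformly in $a\in\mathbb R$. Rescaling $\xi=|t|^{-1/5}\eta$ removes $t$ from the leading term and reduces matters to bounding $\int|\eta|^{3/2}e^{i(\pm\eta^5+b\eta-c\eta^3)}\,d\eta$ uniformly in $b\in\mathbb R$, where $c=\alpha|t|^{2/5}\mathrm{sgn}(t)$ stays bounded because $|t|<1$. The quintic term forces the stationary points to lie near $|\eta|\sim|b|^{1/4}$, where the second derivative of the phase is $\sim|b|^{3/4}$, so a stationary-phase bound there contributes $|b|^{3/8}\cdot|b|^{-3/8}=O(1)$; unwinding the scaling then produces the factor $|t|^{-1/2}$. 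The main obstacle is exactly the uniformity of this last integral: the amplitude $|\eta|^{3/2}$ is non-integrable, so the bound must come purely from oscillation, and one must track the degeneration of the phase when the cubic term $c\eta^3$ competes with $\pm\eta^5$ and the critical points coalesce. I would resolve this by a Littlewood--Paley decomposition in $\eta$ combined with van der Corput's lemma, exploiting the uniform lower bound $\phi^{(5)}\equiv 120$ on the highest-order derivative on the scales where the quintic dominates and a lower-order van der Corput estimate on the remaining bounded range, so that the dyadic pieces sum to a bound independent of $b$ and $c$.
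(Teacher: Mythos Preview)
The paper does not give its own proof of this proposition; it is simply cited from Ben-Artzi and Saut \cite{BenSau} and used as input for the dyadic Strichartz estimates in Proposition~\ref{dydicstri}. Your outline is a correct rendition of the standard argument behind such smoothing Strichartz estimates and is in fact the strategy of \cite{BenSau}: one proves the fixed-time dispersive bound $\||D_x|S(t)u_0\|_{L^\infty}\lesssim|t|^{-1}\|u_0\|_{L^1}$ by reducing, via the Fresnel integral in $\mu$, to a one-dimensional oscillatory integral with amplitude $|\xi|^{3/2}$ and phase $a\xi+t\xi^5-\alpha t\xi^3$, and then combines this with $L^2$ unitarity through analytic interpolation and $TT^*$/Hardy--Littlewood--Sobolev. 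Your identification of the delicate point---the uniform bound on $\int|\eta|^{3/2}e^{i(\pm\eta^5-c\eta^3+b\eta)}\,d\eta$ in the parameters $b$ and bounded $c$, handled by dyadic decomposition plus van der Corput---matches precisely what the cited reference establishes for this class of phases. One small remark: the uniformity in $c$ you need is only for $|c|\lesssim|\alpha|$, which is why the restriction $0<T<1$ (hence $|t-s|<2$ in the $TT^*$ step) appears in the statement; for unbounded $t$ the lower-order term $\alpha t\xi^3$ would not be perturbative and the constant could depend on $T$.
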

Here
$$\|f\|_{L^q_T(L^r_{(x,y)})}=\left(\int_{-T}^T
\left(\iint|f(x,y,t)|^rdxdy\right)^{\frac{q}{r}}dt\right)^{\frac{1}{q}}.$$

The following dyadic decomposed Strichartz estimates are crucial in
our bilinear estimates.
\begin{prop}\label{dydicstri}
Let $\chi_j(\xi,\mu,\tau)=\chi_j(\tau-\omega(\xi,\mu)), j\geq 0$,
and $(q,r)$ as in Proposition \ref{prop2}. Denote
$f_j=(\chi_j(\xi,\mu,\tau)|\hat{f}|(\xi,\mu,\tau))^\vee $. For any
$0<T<1$, we have
\begin{equation}\label{eq:3}
\||D_x|^{\frac{\delta(r)}{2}}f_j\|_{L^q_T(L^r_{(x,y)})}\lesssim
2^{\frac{j}{2}}\|f_j\|_{L^2}.
\end{equation}

\end{prop}
Here
$$\|f\|_{L^2}=\left(\iiint|f(\xi,\mu,\tau)|^2d\xi d\mu d\tau\right)^{\frac{1}{2}}.$$
For the sake of convenience, we would like to state the following
special cases.
\begin{equation}\label{stri-2}
\|f_j\|_{L^\infty_T(L^2_{(x,y)})}\lesssim
2^{j/2}\|f_j\|_{L^2(x,y,t)}.
\end{equation}
\begin{equation}\label{stri-4}
\||D_x|^{\frac{1}{4}}f_j\|_{L^4_T(L^4_{(x,y)})}\lesssim
2^{j/2}\|f_j\|_{L^2(x,y,t)}.
\end{equation}
 For
$0<\delta<\frac{1}{2}$
\begin{equation}\label{stri-s-1}
\||D_x|^{\delta}f_j\|_{L^{\frac{1}{\delta}}_T(L^{\frac{2}{1-2\delta}}_{(x,y)})}
\lesssim2^{j/2}\|f_j\|_{L^2_{(x,y,t)}},
\end{equation}
and
\begin{equation}\label{stri-s-2}
\||D_x|^{\frac{1}{2}-\delta}f_j\|_{L^{\frac{2}{1-2\delta}}_T(L^{\frac{1}{\delta}}_{(x,y)})}\lesssim2^{j/2}
\|f_j\|_{L^2_{(x,y,t)}}.
\end{equation}
\begin{proof}: We first note that
$$f_j(x,y,t)=\int_{\mathbb{R}^3}e^{i(x\xi+y\mu+t\tau)}|\hat{f}|\chi_j(\xi,\mu,\tau)d\xi d\mu d\tau.$$
By a simple change of variables we can write
\begin{equation*}
\begin{split}
f_j(x,y,t)&=\int_{\mathbb{R}^3}e^{i(x\xi+y\mu+t(\lambda+\omega))}|\hat{f}|(\xi,\mu,\lambda+\omega)
\chi_j(\lambda)d\xi d\mu d\lambda \\
 &=\int_{\mathbb{R}}e^{it\lambda}\chi_j(\lambda)
\Big[\int_{\mathbb{R}^2}e^{i(x\xi+y\mu+t\omega)}|\hat{f}|(\xi,\mu,\lambda+\omega)
d\xi d\mu \Big]d\lambda \\
&=\int_{\mathbb{R}} e^{it\lambda}\chi_j(\lambda)
S(t)f_\lambda(x,y)d\lambda.
\end{split}
\end{equation*}
Here
$\widehat{f_\lambda}(\xi,\mu)=|\hat{f}|(\xi,\mu,\lambda+\omega)$.
Then (\ref{eq:3}) follows from Minkowski's inequality, Strichartz
estimate (\ref{Strichartz}) and Cauchy-Schwarz inequality.
\end{proof}

To set up the bilinear estimate in the next section, we will
encounter the interaction between high frequency and  very low
frequency. Then the following maximal estimate will be useful when
we dispose the very low frequency.
\begin{prop}\label{prop5}(Maximal estimate)
Let $T_m$ be the operator such that
$\widehat{T_mf}(\xi,\mu,\tau)=m(\xi,\mu)\hat{f}(\xi,\mu,\tau)$. Then
\begin{equation}\label{eq:2.1}
\|T_m(f)\|_{L^2_t(L^\infty_{x,y})}\lesssim
\|m\|_{L^2_{\xi,\mu}}\|f\|_{L^2}.
\end{equation}
\end{prop}

\begin{proof}
We first notice that
$$T_mf(x,y,t)=\int_{\mathbb R^2}\check{m}(x-x',y-y')f(x',y',t)dx'dy'.$$
Here and below, we use $\check{m}$ to denote the inverse Fourier
transform of a function $m$. Then
$$|T_mf(x,y,t)|\lesssim \|m\|_{L^2}\|f(\cdot,\cdot,t)\|_{L^2_{x,y}}.$$
To end the proof one only take the $L^2$ norm in the $t$ variable.
\end{proof}

At the end of this section, we would like to set up the following
proposition, whose idea comes from Lemma 3.1 of \cite{GinTsuVel}

\begin{prop}\label{prop6}
Let $f$ be a function with compact support (in time) in $[-T,T]$ and
$b\geq0$. For any $a>0$, there exists $\sigma=\sigma(a)>0$, such
that
\begin{equation}\label{eq:11}
\|f\|_{X_{0,(b-a)}}\lesssim T^\sigma\|f\|_{X_{0,b}}.
\end{equation}
\end{prop}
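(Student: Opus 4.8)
The statement asks to trade a small amount of the $X_{0,b}$ regularity in the modulation variable for a positive power of the time-support length $T$. The plan is to reduce everything to a single-variable estimate in the modulation weight $\langle\tau-\omega\rangle$ and exploit the compact support of $f$ in time. Recall that the norm (\ref{Bourgain}) with $s=0$ is a weighted sum over the dyadic modulation pieces $\chi_j(\tau-\omega)\hat f$, so it is essentially equivalent to $\big\|\langle\tau-\omega(\xi,\mu)\rangle^b\hat f(\xi,\mu,\tau)\big\|_{L^2_{\xi,\mu,\tau}}$ up to harmless constants. Since $\omega(\xi,\mu)$ is a fixed multiplier independent of $\tau$, the change of variables $\lambda=\tau-\omega(\xi,\mu)$ (at each fixed $(\xi,\mu)$) decouples the problem: for each frozen $(\xi,\mu)$ one studies the map $g(\lambda)=\hat f(\xi,\mu,\lambda+\omega)$ in the single variable $\lambda$, and $g$ is (up to the modulation by $e^{-it\omega}$, which does not affect the time-support) the Fourier transform of a function supported in $[-T,T]$.

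First I would set up the one-dimensional lemma: if $G$ is supported in $[-T,T]$ then for $0<a\le b$ one has $\big\|\langle\lambda\rangle^{b-a}\hat G\big\|_{L^2_\lambda}\lesssim T^{\sigma}\big\|\langle\lambda\rangle^{b}\hat G\big\|_{L^2_\lambda}$ for some $\sigma=\sigma(a)>0$. The mechanism here is the standard one from Lemma 3.1 of \cite{GinTsuVel}: losing derivatives (lowering $b$) in the frequency weight corresponds to gaining smallness from the short time support. Concretely, one interpolates between the endpoint $a=0$ (trivial) and a small positive gain obtained by writing $G=\psi_T G$ with a cutoff adapted to $[-T,T]$, whose Fourier transform concentrates at scale $T^{-1}$; convolving with $\widehat{\psi_T}$ and estimating the low-modulation region $|\lambda|\lesssim T^{-1}$ produces a factor $T^{\sigma}$. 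I would carry this out by splitting $\lambda$ into $|\lambda|\le T^{-1}$ and $|\lambda|>T^{-1}$: on the high part the weight $\langle\lambda\rangle^{-a}\le T^{a}$ directly gives the gain, and on the low part one uses that $G$ is supported in $[-T,T]$ together with Cauchy--Schwarz or an interpolation against the $b$-norm to absorb the small-modulation contribution.

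Then I would integrate this pointwise (in $(\xi,\mu)$) estimate in $L^2_{\xi,\mu}$ by Fubini/Minkowski, which immediately yields (\ref{eq:11}) after translating back from the $\langle\tau-\omega\rangle^b$ formulation to the dyadic sum in Definition \ref{def2} (the two are comparable, with the dyadic sum costing at most an extra $a$-dependent constant that can be absorbed by slightly shrinking $\sigma$). The main obstacle I anticipate is the low-modulation region $|\tau-\omega|\lesssim T^{-1}$: there the weight gives no decay, and one must genuinely use the time localization of $f$ rather than just the size of $\hat f$. The clean way around this is to write $f=\psi_T f$ explicitly, pass the cutoff through the Fourier transform so that $\hat f=\widehat{\psi_T}\ast_\tau \hat f$, and control the convolution on the low-modulation band using $\|\widehat{\psi_T}\|_{L^1}\lesssim 1$ together with the concentration of $\widehat{\psi_T}$ at scale $T^{-1}$; interpolating this $L^1$-bound against the $X_{0,b}$ bound is what produces the fractional power $T^\sigma$ with $\sigma<a$.
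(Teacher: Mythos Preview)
Your overall strategy---conjugate away the dispersion via $\lambda=\tau-\omega(\xi,\mu)$ (equivalently, replace $f$ by $S(-t)f$), reduce to a one-variable gain-of-$T$ estimate for functions with time support in $[-T,T]$, and then reassemble---is exactly the route the paper takes, and both you and the paper invoke Ginibre--Tsutsumi--Velo for the underlying mechanism. For the one-dimensional core the paper's argument is a bit cleaner than your high/low split: it proves directly that
\[
\big\|\langle\tau-\omega\rangle^{-a}\hat f\big\|_{L^2}\lesssim T^{\sigma}\|f\|_{L^2}
\]
by writing the left side as $\|\langle\partial_t\rangle^{-a}S(-t)f\|_{L^2}$ and combining H\"older in $t$ (using the support in $[-T,T]$) with the Sobolev embedding $H^{a}\hookrightarrow L^{q'}$. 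This is the same content as your duality/convolution idea, just packaged more directly.

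There is, however, a genuine gap in your reduction. You assert that the dyadic norm in Definition~\ref{def2} is ``essentially equivalent'' to $\|\langle\tau-\omega\rangle^{b}\hat f\|_{L^2}$. It is not: the $X_{0,b}$ norm is the $\ell^{1}$-sum $\sum_{j\ge0}2^{jb}\|\chi_j\hat f\|_{L^2}$, whereas $\|\langle\tau-\omega\rangle^{b}\hat f\|_{L^2}$ is (up to constants) the corresponding $\ell^{2}$-sum. One always has $\|\langle\tau-\omega\rangle^{b}\hat f\|_{L^2}\le\|f\|_{X_{0,b}}$, but the reverse inequality is false. Consequently, even if you prove the weighted-$L^{2}$ estimate fiberwise and integrate in $(\xi,\mu)$, you only control $\|\langle\tau-\omega\rangle^{b-a}\hat f\|_{L^2}$, which is \emph{smaller} than the $\|f\|_{X_{0,b-a}}$ you must bound on the left. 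Your ``translating back'' step therefore does not close.

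The paper handles this not by any equivalence but by spending only half of the available decay on the time gain and keeping the other half to sum in $j$: on each shell one writes $2^{j(b-a)}\sim 2^{-ja/2}\langle\tau-\omega\rangle^{-a/2}\langle\tau-\omega\rangle^{b}$, applies the base estimate above (with $a$ replaced by $a/2$) to extract $T^{\sigma}$, and then uses the leftover geometric factor $2^{-ja/2}$ to sum the $\ell^{1}$-series over $j$. Equivalently, you can repair your argument with a single Cauchy--Schwarz in $j$:
\[
\sum_{j\ge0}2^{j(b-a)}\|\chi_j\hat f\|_{L^2}
\le\Big(\sum_{j\ge0}2^{-ja}\Big)^{1/2}\Big(\sum_{j\ge0}2^{j(2b-a)}\|\chi_j\hat f\|_{L^2}^{2}\Big)^{1/2}
\lesssim_{a}\big\|\langle\tau-\omega\rangle^{b-a/2}\hat f\big\|_{L^2},
\]
and then apply your weighted-$L^{2}$ lemma with $a/2$ in place of $a$, after which the right-hand side is bounded by $T^{\sigma}\|\langle\tau-\omega\rangle^{b}\hat f\|_{L^2}\le T^{\sigma}\|f\|_{X_{0,b}}$. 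With this bridging step inserted, your proposal is complete and coincides with the paper's proof.
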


\begin{proof}
We first show that
\begin{equation}\label{eq:12}
\|<\tau-\omega>^{-a}\hat{f}\|_{L^2}\lesssim T^\sigma \|f\|_{L^2}.
\end{equation}
We rewrite
$$\|<\tau-\omega>^{-a}\hat{f}\|_{L^2}=\|S(t)<\partial_t>^{-a}S(-t)f\|_{L^2}.$$
Since $S(t)$ is a unit operator in $L^2$ space and preserves the
support properties in time, we have
\begin{equation}
\begin{split}\|<\tau-\omega>^{-a}\hat{f}\|_{L^2}&=
\|<\partial_t>^{-a}S(-t)f\|_{L^2}\lesssim
T^{\frac{1}{2}-\frac{1}{q'}}\|<\partial_t>^{-a}S(-t)f\|_{L^2_{(x,y)}(L^{q'}_t)}\\
&\lesssim T^{\frac{1}{2}-\frac{1}{q'}}\|S(-t)f\|_{L^2},
\end{split}
\end{equation}
 where
$\frac{1}{2}-\frac{1}{q'}=a<\frac{1}{2}$ or $q'=\infty$, if
$a>\frac{1}{2}.$ We now turn to show (\ref{eq:11}) by (\ref{eq:12}).
By the definition of $X_{0,b}$, we have

\begin{equation*}
\begin{split}
\|f\|_{X_{0,b-a}}=&\sum_{j\geq
0}2^{j(b-a)}\|\chi_j(\tau-\omega(\xi,\mu))\hat{f}\|_{L^2}\\
&\lesssim\sum_{j\geq
0}2^{-aj/2}\|<\tau-\omega>^{-a/2}<\tau-\omega>^b\chi_j(\tau-\omega(\xi,\mu))\hat{f}\|_{L^2}\\
&\lesssim\sum_{j\geq
0}2^{-aj/2}T^\sigma\|<\tau-\omega>^b\chi_j(\tau-\omega(\xi,\mu))\hat{f}\|_{L^2}\\
&\lesssim T^\sigma\|f\|_{X_{0,b}}.
\end{split}
\end{equation*}
\end{proof}

\hspace{5mm}

\section{The Bilinear Estimates}

\begin{theorem}\label{biestimate}
Assume $0<s\leq 1$, and $u,v$ with compact time support on $[-T,T]$,
$0<T<1$. There exists $\sigma>0$ such that
\begin{equation}\label{bilinear}\|\partial_x(uv)\|_{X_{s,-\frac{1}{2}+}}\lesssim
T^\sigma\|u\|_{X_{s,\frac{1}{2}+}}\|v\|_{X_{s,\frac{1}{2}+}}.
\end{equation}
Here $-\frac{1}{2}+=(\frac{1}{2}+)-1.$
\end{theorem}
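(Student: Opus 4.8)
The plan is to dualize blockwise, decompose $u$ and $v$ dyadically in both modulation and $x$-frequency, and then separate the analysis according to whether the frequency interaction is high-high or high-low, recovering the derivative $\partial_x$ from the resonance function (\ref{5KP-Iresonance}) in the first case and from the maximal estimate (\ref{eq:2.1}) in the second. Since the norm in Definition \ref{def2} is of $\ell^1$-Besov type in the modulation index, I would write $u=\sum_{j_1\geq0}u_{j_1}$, $v=\sum_{j_2\geq0}v_{j_2}$ with $\widehat{u_{j_1}}=\chi_{j_1}(\tau-\omega)\hat u$, project the product onto output modulation $\chi_{j_3}$, and localize $|\xi_1|\sim N_1$, $|\xi_2|\sim N_2$, $|\xi_1+\xi_2|\sim N_3$ to dyadic scales. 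Writing $w(\xi,\mu)=1+|\xi|^2+|\mu|/|\xi|$, it then suffices to prove a single-block bound of the form
\begin{equation*}
2^{-j_3/2}\,\|\chi_{j_3}w^s\,\widehat{\partial_x(u_{j_1}v_{j_2})}\|_{L^2}\lesssim C(N_i,j_i)\,2^{j_1/2}2^{j_2/2}\,\|w^s\hat u_{j_1}\|_{L^2}\,\|w^s\hat v_{j_2}\|_{L^2},
\end{equation*}
with a constant $C$ enjoying enough off-diagonal decay that, after restoring the factors $2^{\epsilon j_i}$ carried by the exponent $\tfrac12+=\tfrac12+\epsilon$, the sums over the $j_i$ and $N_i$ converge. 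The left side I would realize by testing against a function localized to output modulation $2^{j_3}$, turning it into a physical-space trilinear integral $\int u_{j_1}v_{j_2}\,\overline{\partial_x g_{j_3}}$ to which H\"older together with the Strichartz, dyadic Strichartz and maximal estimates of Section 2 apply.

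The engine is the resonance identity: on the support of the convolution $\tau=\tau_1+\tau_2$ one has $(\tau-\omega)-(\tau_1-\omega_1)-(\tau_2-\omega_2)=-R$ with $R$ as in (\ref{5KP-Iresonance}), so $\max(2^{j_1},2^{j_2},2^{j_3})\gtrsim|R|$. In the high-high regime $N_1\sim N_2\gtrsim N_3$ with $N_1\gtrsim|\alpha|^{1/2}$, the leading term $5(\xi_1^2+\xi_1\xi_2+\xi_2^2)$ forces $|R|$ large enough to pay for the full derivative $N_3$ coming from $\partial_x$; I would distribute this modulation gain $2^{j_{\max}/2}\gtrsim|R|^{1/2}$ against the $2^{j_i/2}$ losses of the dyadic Strichartz estimate and bound two of the three factors in $L^4_{x,y,t}$ via (\ref{stri-4}), the fifth-order smoothing (\ref{5KP-1sign}) being exactly what makes those Strichartz gains available. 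The complementary region of bounded $x$-frequency, $N_i\lesssim|\alpha|^{1/2}$, carries no large derivative and is disposed of directly by (\ref{Strichartz}) and H\"older.

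The hard part is the high-low interaction $N_2\ll N_1\sim N_3$, where $\partial_x$ places a full derivative $N_3\sim N_1$ on the output that must be recovered although one factor sits at very low $x$-frequency. I would split this region according to (\ref{5KP-Iresonance}). In the non-resonant part the leading term gives $|R|\gtrsim N_2N_1^4$, so some modulation is large and the derivative is recovered as in the high-high case. The delicate part is the resonant set, where $(\mu_1/\xi_1-\mu_2/\xi_2)^2$ nearly cancels $(\xi_1+\xi_2)^2[5(\xi_1^2+\xi_1\xi_2+\xi_2^2)-3\alpha]$: there $|R|$ is small and modulation useless, but the cancellation forces $|\mu_2|/|\xi_2|$ to be comparable to $N_1^2$, so the low-frequency factor in fact carries a correspondingly large weight $w(\xi_2,\mu_2)^s$. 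I would then place this low factor in $L^2_t(L^\infty_{x,y})$ through the maximal estimate (\ref{eq:2.1}), the high factor in $L^\infty_t(L^2_{x,y})$ through (\ref{stri-2}), and the dual function in $L^2_{x,y,t}$, combining by H\"older in time with exponents $(2,\infty,2)$; the multiplier norm $\|m\|_{L^2_{\xi,\mu}}$ supplies a positive power of $N_2$ from the low-frequency support, which together with the large weight closes the derivative count and makes the sum over the dyadic scales $N_2\to0$ convergent precisely when $s>0$. This resonant high-low region, balancing the maximal estimate against the fifth-order dyadic Strichartz estimate and the $|\mu|/|\xi|$ part of the weight, is the main obstacle and the reason $s>0$ is imposed, as noted after Theorem \ref{global}.

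Finally, each single-block bound above carries a surplus power of the largest modulation, so it remains valid with the inputs measured at $b=\tfrac12-\theta$ for some small $\theta>0$. Since $u$ and $v$ have compact time support in $[-T,T]$, the $X_{s,b}$ analogue of Proposition \ref{prop6}, namely (\ref{eq:11}), gives $\|u\|_{X_{s,1/2-\theta}}\lesssim T^\sigma\|u\|_{X_{s,1/2}}\leq T^\sigma\|u\|_{X_{s,1/2+}}$ and likewise for $v$, producing the factor $T^\sigma$. Re-summing the dyadic pieces with the off-diagonal decay in the $j_i$ and $N_i$ then yields (\ref{bilinear}).
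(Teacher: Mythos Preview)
Your overall framework---duality, dyadic decomposition in modulation and $x$-frequency, the resonance identity, and the toolbox of dyadic Strichartz and maximal estimates---matches the paper's. But there is a genuine gap in how you handle the truly resonant regions, and it is precisely there that the paper introduces its main analytic device, which your proposal does not contain.

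In the high--high regime you assert that ``the leading term $5(\xi_1^2+\xi_1\xi_2+\xi_2^2)$ forces $|R|$ large enough to pay for the full derivative''. This is false: the resonance function (\ref{5KP-Iresonance}) contains the subtracted term $(\mu_1/\xi_1-\mu_2/\xi_2)^2$, and when this term is comparable to $(\xi_1+\xi_2)^2[5(\xi_1^2+\xi_1\xi_2+\xi_2^2)-3\alpha]$ one has $R$ arbitrarily small even with $N_1\sim N_2$ large. No modulation is then available, and Strichartz alone cannot recover the derivative. The paper treats this region (its Subcase A2, Subsubcase 2) by an explicit change of variables in the trilinear integral, passing to coordinates $(u,v,w,\mu_2)$ or $(u,v,w,\xi_1)$ with $u=\xi_1+\xi_2$, $v=\mu_1+\mu_2$, $w=\tau_1+\tau_2$, computing the Jacobians $J_\mu$, $J_\xi$, and showing that on the resonant set the remaining free variable is confined to an interval of length $\lesssim 2^{j-2m_1}$ or $\lesssim 2^{-3m_1}$. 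This measure gain, driven by the lower bound $|\nabla\omega|\gtrsim|\xi|^2$ in (\ref{5KP-1sign}), is what actually closes the estimate; your proposal has no mechanism for it.

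Your high--low resonant argument has a related error. You claim that on the resonant set ``the cancellation forces $|\mu_2|/|\xi_2|$ to be comparable to $N_1^2$''. The resonance condition only gives $|\mu_1/\xi_1-\mu_2/\xi_2|\sim N_1^2$, which is equally consistent with $|\mu_1|/|\xi_1|\sim N_1^2$ and $|\mu_2|/|\xi_2|$ small. In that configuration (the paper's Region II, Case C of the high--low analysis) the low-frequency factor carries no usable weight $w(\xi_2,\mu_2)^s$, the maximal estimate by itself cannot absorb the derivative $N_1$, and the paper again falls back on the Jacobian change of variables---now with the refined dichotomy (\ref{eq:3.15})/(\ref{eq:3.16}) on the size of $J_\mu$. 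Without this change-of-variables technique the proof does not close; it is the essential idea beyond Strichartz and the maximal estimate, and it is absent from your plan.
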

\begin{re}
The bilinear estimate above plays a key role in the method of Picard
iteration. There are many literatures considering the multilinear
estimates. Among them we prefer to pay more attention on
\cite{KenPonVeg} and \cite{Tao}. In \cite{KenPonVeg}, Kenig, Ponce
and Vega present a bilinear estimate in the studying of the IVP of
KdV. It mainly depends on the estimate of the resonance function.
Since in the KdV case, the resonant set is very simple, the
decomposition of frequency method can bring us enough benefit.
Recently, the first two authors \cite{CheLi} obtained the low
regularity of modified
 KdV-Burgers equation by this method. In \cite{Tao},  Tao presented another program to obtain the multilinear estiamtes.
 He used the dual argument and dyadic decomposition to transform the
 multilinear estimate into the estimates of some multipliers on
 some basic boxes. This method can be used to study some more
 complicated cases. We also applied this method in a recent
 paper \cite{CheLiMia} to set up the well-posedness of the IVP of the modified KdV equation with a dissipative term.
As pointed out in \cite{Tao}, the estimate in the box for the KP
equation is much complicated. In this paper, we would like to use
the dyadic decomposition, the Strichartz estimates and the
dispersive smoothing effect to exhaust the structure of the zero set
of KP-I resonance function.
\end{re}
We use the duality to prove the bilinear estimate (\ref{bilinear}).
To make our argument more clear, we would like to divide our
estimates into two catalogs according to the main term in
$(1+|\xi|^2+|\mu||\xi|^{-1})$. It means that we need to estimate,
for $g_j\geq0$,
\begin{equation}\label{eq:3.1}
\begin{split}
\sum_{j\geq0}&2^{j(-\frac{1}{2}+)}\int_{A*}g_j(\xi,\mu,\tau)\chi_j(\tau-\omega(\xi,\mu))\chi_1(\xi,\mu)\\
&\times
|\xi|(1+|\xi|^2)^s|\hat{u}|(\xi_1,\mu_1,\tau_1)|\hat{v}|(\xi_2,\mu_2,\tau_2)d\xi_1d\mu_1d\tau_1d\xi_2d\mu_2d\tau_2,
\end{split}
\end{equation}
and
\begin{equation}\label{eq:3.2}
\begin{split}
\sum_{j\geq0}&2^{j(-\frac{1}{2}+)}\int_{A*}g_j(\xi,\mu,\tau)\chi_j(\tau-\omega(\xi,\mu))\chi_2(\xi,\mu)\\
&\times
|\xi|(1+\frac{|\mu|}{|\xi|})^s|\hat{u}|(\xi_1,\mu_1,\tau_1)|\hat{v}|
(\xi_2,\mu_2,\tau_2)d\xi_1d\mu_1d\tau_1d\xi_2d\mu_2d\tau_2,
\end{split}
\end{equation}
where $A*$ is the set
$\{\xi_1+\xi_2=\xi,\,\mu_1+\mu_2=\mu,\,\tau_1+\tau_2=\tau\}$,
$\chi_1(\xi,\mu)$ is the characteristic function of the set
$\{|\xi|^2\geq\frac{|\mu|}{|\xi|}\}$, $\chi_2(\xi,\mu)$ is the
characteristic function of the set $\{|\xi|^2<\frac{|\mu|}{|\xi|}\}$
and $\|g_j\chi_1\chi_j\|_{L^2}\leq 1$ and
$\|g_j\chi_2\chi_j\|_{L^2}\leq 1$. It is clear that by symmetry one
can always assume that $|\xi_1|\geq|\xi_2|$. The KP-I problem is
difficult since resonant set is complicated. We will decompose $A*$
into several domains. For each domain, we decompose it into some
tiny sets, and use the estimates in Section 2 on these tiny sets.
For instance, when the resonant happens, we will consult to the
maximum estimates and the dyadic decomposed Strichartz estimates. We
start by subdividing $A*$ into three domains of integration by
\begin{description}
  \item[Low-Low interaction domain]
  $$A_1=\{|\xi_1|\geq|\xi_2|;|\xi_1|\leq100\max(1,\sqrt{|\alpha|})\};$$
  \item[High-High interaction domain]
  $$A_2=\{|\xi_1|\geq|\xi_2|;|\xi_2|\sim|\xi_1|\geq100\max(1,\sqrt{|\alpha|})\};$$
  \item[High-Low interaction domain]
  $$A_3=\{|\xi_1|\gg|\xi_2|;|\xi_1|\geq100\max(1,\sqrt{|\alpha|})\}.$$
\end{description}

\begin{proof}{\it of Theorem \ref{biestimate}.} Denote
$$\hat{\phi}_1(\xi,\mu,\tau)=(1+|\xi|^2+|\mu|/|\xi|)^s|\hat{u}|(\xi,\mu,\tau);$$
and
$$\hat{\phi}_2(\xi,\mu,\tau)=(1+|\xi|^2+|\mu|/|\xi|)^s|\hat{v}|(\xi,\mu,\tau).$$
Then we need to prove, there exists $\sigma>0$ such that
$$\|\partial_x(uv)\|_{X_{s,-\frac{1}{2}+}}\lesssim T^\sigma\|\phi_1\|_{X_{0,\frac{1}{2}+}}
\|\phi_2\|_{X_{0,\frac{1}{2}+}}.$$ By Proposition \ref{prop6}, it
suffices to show that
\begin{equation}\label{eq:3.10}
\|\partial_x(uv)\|_{X_{s,-\frac{1}{2}+}}\lesssim
\|\phi_1\|_{X_{0,\frac{1}{2}+}}
\|\phi_2\|_{X_{0,\frac{1}{2}}}+\|\phi_1\|_{X_{0,\frac{1}{2}}}
\|\phi_2\|_{X_{0,\frac{1}{2}+}}.
\end{equation}
We now control the following two terms by the right hand side of
(\ref{eq:3.10}).
\begin{equation}\label{eq:3.3}
\begin{split}
\sum_{j\geq0}&2^{j(-\frac{1}{2}+)}\int_{A*}g_j(\xi,\mu,\tau)\chi_j(\tau-\omega(\xi,\mu))\chi_1(\xi,\mu)\\
&\times
|\xi|(1+|\xi|^2)^s\frac{\hat{\phi}_1(\xi_1,\mu_1,\tau_1)}{(1+|\xi_1|^2+\frac{|\mu_1|}{|\xi_1|})^s}
\frac{\hat{\phi}_2(\xi_2,\mu_2,\tau_2)}{(1+|\xi_2|^2+\frac{|\mu_2|}{|\xi_2|})^s},
\end{split}
\end{equation}
and
\begin{equation}\label{eq:3.4}
\begin{split}
\sum_{j\geq0}&2^{j(-\frac{1}{2}+)}\int_{A*}g_j(\xi,\mu,\tau)\chi_j(\tau-\omega(\xi,\mu))\chi_2(\xi,\mu)\\
&\times
|\xi|(1+\frac{|\mu|}{|\xi|})^s\frac{\hat{\phi}_1(\xi_1,\mu_1,\tau_1)}{(1+|\xi_1|^2+\frac{|\mu_1|}{|\xi_1|})^s}
\frac{\hat{\phi}_2(\xi_2,\mu_2,\tau_2)}{(1+|\xi_2|^2+\frac{|\mu_2|}{|\xi_2|})^s}.
\end{split}
\end{equation}
Another assumption is that function
$$G_{i,j}(x,y,t)=\mathcal{F}^{-1}\Bigg(|\xi|\Big(1+|\xi|^2+\frac{|\mu|}{|\xi|}\Big)^sg_j(\xi,\mu,\tau)
 \chi_j\big(\tau-\omega(\xi,\mu)\big)\chi_i(\xi,\mu)\Bigg)(x,y,t)$$
has compact support in time (supporting in the set $[-T,T]$) for
$i=1,2,j\in\mathbb{N}.$ In fact, if we denote
$$\Phi_i(x,y,t)=\mathcal{F}^{-1}\Bigg(\frac{\hat{\phi}_i(\xi_i,\mu_i,\tau_i)}{\big(1+|\xi_i|^2+\frac{|\mu_i|}
{|\xi_i|}\big)^s}\Bigg)(x,y,t), \,\,\,\,\,\,\text{for}\,\, i=1,2,$$
the
 integral in (\ref{eq:3.3}) and (\ref{eq:3.4}) can be written as
 a inner product
 $<G_{i,j},\Phi_1\Phi_2>$. Since $u$ and $v$ have compact support with respect to $t\in[-T,T]$,
 then $\Phi_1\Phi_2$ has the same compact support in time with $u$ and $v$.
 Thus the inner product $<G_{i,j},\Phi_1\Phi_2>$ can be
 restricted on the interval $[-T,T]$ according to the time axis. It
 means we can assume that $G_{i,j}$ has the same compact support in time.
 We also need some other notations.
 $$\hat{\phi}_{i,j_i}=\hat{\phi}_i\chi_{j_i}(\tau_i-\omega(\xi_i,\mu_i)),\quad\quad
i=1,2,$$
$$\hat{\phi}_{i,j_i,m_i}=\hat{\phi}_i\chi_{j_i}(\tau_i-\omega(\xi_i,\mu_i))\theta_{m_i}(\xi_i),\quad\quad
i=1,2,$$
$$\hat{\phi}_{i,j_i,n_i}=\hat{\phi}_i\chi_{j_i}(\tau_i-\omega(\xi_i,\mu_i))\theta_{n_i}(\mu_i),\quad\quad
i=1,2,$$ and
$$\hat{\phi}_{i,j_i,m_i,n_i}=\hat{\phi}_i\chi_{j_i}(\tau_i-\omega(\xi_i,\mu_i))\theta_{m_i}(\xi_i)
\theta_{n_i}(\mu_i),\quad\quad
i=1,2.$$ Here we used the notation
$\theta_0(\eta)=\chi_{[0,1]}(|\eta|),\,\theta_m(\eta)=\chi_{[2^{m-1},\,2^m]}(|\eta|),\,m\in\mathbb{N}.$
Some times, we may use $g_j$ instead of
$g_j(\xi,\mu,\tau)\chi_j(\tau-\omega(\xi,\mu))$, one can figure out
it in the context. Then we can decompose (\ref{eq:3.3}) and
(\ref{eq:3.4}) by
\begin{equation}\label{1}
\begin{split}
\sum_{j_1,j_2\geq0}\sum_{j\geq0}&2^{j(-\frac{1}{2}+)}
\int_{A*}g_j(\xi,\mu,\tau)\chi_j(\tau-\omega(\xi,\mu))\chi_1(\xi,\mu)\\
&\times
|\xi|(1+|\xi|^2)^s\frac{\hat{\phi}_{1,j_1}(\xi_1,\mu_1,\tau_1)}{(1+|\xi_1|^2+\frac{|\mu_1|}{|\xi_1|})^s}
\frac{\hat{\phi}_{2,j_2}(\xi_2,\mu_2,\tau_2)}{(1+|\xi_2|^2+\frac{|\mu_2|}{|\xi_2|})^s},
\end{split}
\end{equation}
and
\begin{equation}\label{2}
\begin{split}
\sum_{j_1,j_2\geq0}\sum_{j\geq0}&2^{j(-\frac{1}{2}+)}
\int_{A*}g_j(\xi,\mu,\tau)\chi_j(\tau-\omega(\xi,\mu))\chi_2(\xi,\mu)\\
&\times
|\xi|(1+\frac{|\mu|}{|\xi|})^s\frac{\hat{\phi}_{1,j_1}(\xi_1,\mu_1,\tau_1)}{(1+|\xi_1|^2+\frac{|\mu_1|}{|\xi_1|})^s}
\frac{\hat{\phi}_{2,j_2}(\xi_2,\mu_2,\tau_2)}{(1+|\xi_2|^2+\frac{|\mu_2|}{|\xi_2|})^s}.
\end{split}
\end{equation}

\vspace{0.5cm}

\noindent{\bf Low-Low interaction}.

\vspace{0.3cm}

\noindent {\bf Case A:} $|\xi_1+\xi_2|^2\geq
\frac{|\mu_1+\mu_2|}{|\xi_1+\xi_2|}$.

\vspace{0.3cm}

\noindent In this case, we have
$|\xi_1+\xi_2|\lesssim|\xi_1|\lesssim\max(1,\sqrt{|\alpha|})$. And
we also have $|\mu_1+\mu_2|\leq
|\xi_1+\xi_2|^3\lesssim\max(1,|\alpha|^\frac{3}{2}).$ Thus we have
\begin{equation}\label{eq:3.9}
\begin{split}
(\ref{1})&\lesssim\sum_{j,j_1,j_2\geq 0}2^{j(-\frac{1}{2}+)}
\|(m(\xi,\mu)g_j)^\vee\|_{L^2_T(L^\infty_{x,y})}\|\phi_{1,j_1}\|_{L^2(\xi,\mu,\tau)}
\|\phi_{2,j_2}\|_{L^\infty_T(L^2_{x,y})}\\
&\lesssim \sum_{j,j_1,j_2\geq 0}
2^{j(-\frac{1}{2}+)}\|g_j\|_{L^2}\|\phi_{1,j_1}\|_{L^2}2^{j_2/2}\|\phi_{2,j_2}\|_{L^2}\\
&\lesssim
\|\phi_1\|_{X_{0,\frac{1}{2}}}\|\phi_2\|_{X_{0,\frac{1}{2}}}.
\end{split}
\end{equation}
Here
$m(\xi,\mu)=\chi_{|\xi|\lesssim\max(1,|\alpha|^{\frac{1}{2}}),|\mu|\lesssim
\max(1,|\alpha|^{\frac{3}{2}})}$, which belongs to
$L^2(\mathbb{R\times R})$.

\vspace{0.3cm}

\noindent {\bf Case B:}
$|\xi_1+\xi_2|^2\leq\frac{|\mu_1+\mu_2|}{|\xi_1+\xi_2|}$.

\vspace{0.3cm}

\noindent We first note that if
$\frac{|\mu_1+\mu_2|}{|\xi_1+\xi_2|}\leq1$, then argument above can
also bring us the same estimate. We need only to consider the case
$\frac{|\mu_1+\mu_2|}{|\xi_1+\xi_2|}\geq1$.
\begin{equation*}
\begin{split}
(\ref{2})\lesssim\sum_{j_1,j_2\geq0}\sum_{j\geq0}&2^{j(-\frac{1}{2}+)}
\int_{A_1}g_j(\xi,\mu,\tau)\chi_j(\tau-\omega(\xi,\mu))
\chi_2(\xi,\mu)\\
&\times
|\xi|^{1-s}|\mu|^s\frac{\hat{\phi}_{1,j_1}(\xi_1,\mu_1,\tau_1)}{(1+|\xi_1|^2+\frac{|\mu_1|}{|\xi_1|})^s}
\frac{\hat{\phi}_{2,j_2}(\xi_2,\mu_2,\tau_2)}{(1+|\xi_2|^2+\frac{|\mu_2|}{|\xi_2|})^s}.
\end{split}
\end{equation*}
We then consider two subcases.

\vspace{0.3cm}

\noindent {\bf Subcase B1:} $|\mu_1|\leq|\mu_2|$.

\vspace{0.3cm}

\noindent If $\frac{|\mu_2|}{|\xi_2|}\leq |\xi_2|^2$, then
$|\mu_2|\lesssim \max(1,|\alpha|^{\frac{3}{2}}).$ Since
$|\xi_1+\xi_2|\lesssim\max(1,|\alpha|^{1/2})$, we have
\begin{equation}\label{eq:3.41}
\begin{split}
(\ref{2})&\lesssim\sum_{j,j_1,j_2\geq 0}2^{j(-\frac{1}{2}+)}
\|(m(\xi,\mu)g_j)^\vee\|_{L^2_T(L^\infty_{x,y})}\|\phi_{1,j_1}\|_{L^2}
\|\phi_{2,j_2}\|_{L^\infty_T(L^2_{x,y})}\\
&\lesssim \sum_{j,j_1,j_2\geq 0}
2^{j(-\frac{1}{2}+)}\|g_j\|_{L^2}\|\phi_{1,j_1}\|_{L^2}2^{j_2/2}\|\phi_{2,j_2}\|_{L^2}\\
&\lesssim
\|\phi_1\|_{X_{0,\frac{1}{2}}}\|\phi_2\|_{X_{0,\frac{1}{2}}}.
\end{split}
\end{equation}
Here
$m(\xi,\mu)=\chi_{|\xi|\lesssim\max(1,|\alpha|^{\frac{1}{2}}),|\mu|\lesssim
\max(1,|\alpha|^{\frac{3}{2}})}$.

\noindent If $\frac{|\mu_2|}{|\xi_2|}\geq |\xi_2|^2$. We first
consider the case
$\frac{|\mu|}{|\xi|}\lesssim\frac{|\mu_2|}{|\xi_2|}$. Thus we can
choose $\min(\frac{1}{2},s)>\delta>0$ as small as possible such that
\begin{equation*}
\begin{split}
(\ref{2})&\lesssim\sum_{j_1,j_2\geq0}\sum_{j\geq0}2^{j(-\frac{1}{2}+)}
\int_{A_1}g_j(\xi,\mu,\tau)\chi_j(\tau-\omega(\xi,\mu))
\chi_2(\xi,\mu)\\
&\quad\quad\times
|\xi|^{1-\delta}\Big(\frac{|\mu|}{|\xi|}\Big)^{s-\delta}|\mu|^{\delta}\hat{\phi}_{1,j_1}(\xi_1,\mu_1,\tau_1)
\frac{\hat\phi_{2,j_2}(\xi_2,\mu_2,\tau_2)}{\Big(\frac{|\mu_2|}{|\xi_2|}\Big)^{s-\delta}
\Big(\frac{|\mu_2|}{|\xi_2|}\Big)^{\delta}}\\
&\lesssim\sum_{j_1,j_2,j\geq0}
2^{j(-\frac{1}{2}+)}\int_{A_1}g_j(\xi,\mu,\tau)\chi_j(\tau-\omega(\xi,\mu))\chi_2(\xi,\mu)\\
&\quad\quad\times
|\xi|^{1-\delta}\hat{\phi}_{1,j_1}(\xi_1,\mu_1,\tau_1)
|\xi_2|^{\delta}\hat{\phi}_{2,j_2}(\xi_2,\mu_2,\tau_2).
\end{split}
\end{equation*}
\noindent If $j\leq j_2$, by H\"older's inequality and
(\ref{stri-4}), we get
\begin{equation*}
\begin{split}
(\ref{2})&\lesssim \sum_{j_1,j_2\geq0}\sum_{j_2\geq
j\geq0}2^{j(-\frac{1}{2}+)}\||D_x|^{\frac{1}{4}}g_j^{\vee}\|_{L^4_T(L^4_{x,y})}
\||D_x|^\frac{1}{4}\phi_{1,j_1}\|_{L^4_T(L^4_{x,y})}\|\phi_{2,j_2}\|_{L^2}\\
&\lesssim\sum_{j_1,j_2\geq0}\sum_{j_2\geq
j\geq0}2^{j(-\frac{1}{2}+)}2^{j/2}2^{j_1/2}\|g_j\|_{L^2}
\|\phi_{1,j_1}\|_{L^2}\|\phi_{2,j_2}\|_{L^2}\\
&\lesssim\|\phi_1\|_{X_{0,\frac{1}{2}}}\|\phi_2\|_{X_{0,\frac{1}{2}}}.
\end{split}
\end{equation*}

\noindent If $j\geq j_2$, by H\"older's inequality and
(\ref{stri-s-1}) and (\ref{stri-s-2}), we obtain
\begin{equation*}
\begin{split}
(\ref{2})&\lesssim \sum_{j,j_1\geq0}\sum_{j\geq
j_2\geq0}2^{j(-\frac{1}{2}+)}\|g_j\|_{L^2}
\||D_x|^{\frac{1}{2}-\delta}\phi_{1,j_1}\|_{L^\frac{2}{1-2\delta}_T(L^\frac{1}{\delta}_{x,y})}
\||D_x|^\delta\phi_{2,j_2}\|_{L^\frac{1}{\delta}_T(L^\frac{2}{1-2\delta}_{x,y})}\\
&\lesssim\sum_{j,j_1\geq0}\sum_{j\geq
j_2\geq0}2^{j(-\frac{1}{2}+)}2^{j_1/2}2^{j_2/2}\|g_j\|_{L^2}
\|\phi_{1,j_1}\|_{L^2}\|\phi_{2,j_2}\|_{L^2}\\
&\lesssim\|\phi_1\|_{X_{0,\frac{1}{2}}}\|\phi_2\|_{X_{0,\frac{1}{2}}}.
\end{split}
\end{equation*}
If $\frac{|\mu|}{|\xi|}\gg\frac{|\mu_2|}{|\xi_2|}$ and
$0<s\leq\frac{1}{2}$, the proof above can also work. We only need to
estimate the case $\frac{1}{2}<s\leq 1.$
\begin{equation*}
\begin{split}
(\ref{2})&\lesssim\sum_{j_1,j_2,j\geq0}
2^{j(-\frac{1}{2}+)}\int_{A_1}g_j(\xi,\mu,\tau)\chi_j(\tau-\omega(\xi,\mu))\chi_2(\xi,\mu)\\
&\quad\quad\times
|\xi|^{1-s}|\mu|^s\frac{\hat{\phi}_{1,j_1}(\xi_1,\mu_1,\tau_1)}{(1+|\mu_1|)^s}
\frac{\hat{\phi}_{2,j_2}(\xi_2,\mu_2,\tau_2)}{|\mu_2|^{s}}.
\end{split}
\end{equation*}
In addition, we decompose $|\mu_1|\sim 2^{n_1}$ for $n_1\geq0$. Thus
\begin{equation*}
\begin{split}
(\ref{2})&\lesssim \sum_{j_1,j_2\geq0}\sum_{j\geq0}\sum_{n_1\geq
0}2^{j(-\frac{1}{2}+)}2^{-n_1s}\|g_j\|_{L^2}
\|\phi_{1,j_1,n_1}\|_{L^2_T(L^\infty_{x,y})}\|\phi_{2,j_2}\|_{L^\infty_T(L^2_{x,y})}\\
&\lesssim\sum_{j_1,j_2\geq0}\sum_{j\geq0}\sum_{n_1\geq
0}2^{j(-\frac{1}{2}+)}2^{j_2/2}2^{-n_1(s-\frac{1}{2})}\|g_j\|_{L^2}
\|\phi_{1,j_1,n_1}\|_{L^2}\|\phi_{2,j_2}\|_{L^2}\\
&\lesssim\|\phi_1\|_{X_{0,\frac{1}{2}}}\|\phi_2\|_{X_{0,\frac{1}{2}}}.
\end{split}
\end{equation*}
Here we used the fact that $|\xi_1|\lesssim\max(1,\sqrt{|\alpha|})$
and $|\mu_1|\lesssim 2^{n_1}$ with Proposition \ref{prop5}.

\vspace{0.3cm}

\noindent {\bf Subcase B2:} $|\mu_1|\geq|\mu_2|$.

\noindent If $|\mu_2|<1$, we obtain
\begin{equation*}
\begin{split}
(\ref{2})&\lesssim\sum_{j,j_1,j_2\geq 0}2^{j(-\frac{1}{2}+)}
\|g_j\|_{L^2}\|\phi_{1,j_1}\|_{L^\infty_T(L^2_{x,y})}
\|(m(\xi_2,\mu_2)\hat{\phi}_{2,j_2})^\vee\|_{L^2_T(L^\infty_{x,y})}\\
&\lesssim \sum_{j,j_1,j_2\geq 0}
2^{j(-\frac{1}{2}+)}\|g_j\|_{L^2}2^{j_1/2}\|\phi_{1,j_1}\|_{L^2}\|\phi_{2,j_2}\|_{L^2}\\
&\lesssim
\|\phi_1\|_{X_{0,\frac{1}{2}}}\|\phi_2\|_{X_{0,\frac{1}{2}}}.
\end{split}
\end{equation*}
Here $m(\xi_2,\mu_2)$ denotes the characteristic function of the set
$\{(\xi_2,\mu_2); |\xi_2|\lesssim \max(1,|\alpha|^{\frac{1}{2}}),$ $
|\mu_2|<1\}$. Thus, we need only to consider the case $|\mu_2|\geq
1$. In this case, we can run the same argument with the Subcase B1
by interchanging the positions of $|\mu_1|$ and $|\mu_2|$. We omit
the details.

\vspace{0.5cm}

\noindent {\bf High-High interaction}

\vspace{0.3cm}

\noindent {\bf Case A:}
$|\xi_1+\xi_2|^2\geq\frac{|\mu_1+\mu_2|}{|\xi_1+\xi_2|}.$

\vspace{0.3cm}

\noindent We can also assume that $|\xi_1+\xi_2|\gtrsim
\max(1,|\alpha|^{1/2}).$ Otherwise we go back to (\ref{eq:3.9}). We
now run dyadic decomposition with respect to $|\xi_1|\sim 2^{m_1}$
(hence $|\xi_2|\sim 2^{m_1}$) and $|\xi|\sim 2^m$ with $m_1+1\geq
m\geq0$.
\begin{equation*}
\begin{split}
(\ref{1})\lesssim\sum_{j_1,j_2\geq0}\sum_{j\geq0}\sum_{m_1+1\geq
m\geq0}&2^{j(-\frac{1}{2}+)}
\int_{A_2}g_j(\xi,\mu,\tau)\chi_j(\tau-\omega(\xi,\mu))\chi_1(\xi,\mu)\\
&\times 2^{m(1+2s)}2^{m_1(-4s)}
\hat{\phi}_{1,j_1,m_1}(\xi_1,\mu_1,\tau_1)
\hat{\phi}_{2,j_2,m_1}(\xi_2,\mu_2,\tau_2).
\end{split}
\end{equation*}
We now consider two subcases.

\vspace{0.3cm}

\noindent {\bf Subcase A1:} $\max(j,j_2)\geq 2m_1.$

\vspace{0.3cm}

\noindent If $j\leq j_2$, we obtain
\begin{equation*}
\begin{split}
(\ref{1})&\lesssim \sum_{j_1,j_2\geq0}\sum_{j_2\geq
j\geq0}\sum_{\frac{j_2}{2}\geq m_1>
0}2^{j(-\frac{1}{2}+)}2^{m_1(\frac{1}{2}-2s)}\||D_x|^{\frac{1}{4}}g_j^\vee\|_{L^4_T(L^4_{x,y})}
\||D_x|^\frac{1}{4}\phi_{1,j_1,m_1}\|_{L^4_T(L^4_{x,y})}\\
&\quad\quad\quad\quad\times\|\phi_{2,j_2,m_1}\|_{L^2}\\
&\lesssim\sum_{j_1,j_2\geq0}\sum_{j_2\geq
j\geq0}2^{j(-\frac{1}{2}+)}2^{j/2}2^{j_1/2}
2^{j_2(\frac{1}{4}-s)}\|g_j\|_{L^2}
\|\phi_{1,j_1}\|_{L^2}\|\phi_{2,j_2}\|_{L^2}\\
&\lesssim\sum_{j_1,j_2\geq0}\sum_{j_2\geq
j\geq0}2^{j(-\frac{1}{2}+)}2^{j/2}2^{j_1/2}
2^{j_2(\frac{1}{2}+)}2^{-j_2(s+\frac{1}{4}+)}\|g_j\|_{L^2}
\|\phi_{1,j_1}\|_{L^2}\|\phi_{2,j_2}\|_{L^2}\\
&\lesssim\|\phi_1\|_{X_{0,\frac{1}{2}}}\|\phi_2\|_{X_{0,\frac{1}{2}+}}.
\end{split}
\end{equation*}

\noindent If $j>j_2$, we also have
\begin{equation*}
\begin{split}
(\ref{1})&\lesssim
\sum_{j_1\geq0}\sum_{j>j_2\geq0}\sum_{\frac{j}{2}\geq m_1>
0}2^{j(-\frac{1}{2}+)}2^{m_1(\frac{1}{2}-2s)}\||D_x|^{\frac{1}{4}}\phi_{2,j_2,m_1}\|_{L^4_T(L^4_{x,y})}
\||D_x|^\frac{1}{4}\phi_{1,j_1,m_1}\|_{L^4_T(L^4_{x,y})}\\
&\quad\quad\quad\quad\times\|g_j\|_{L^2}\\
&\lesssim\sum_{j_1\geq0}\sum_{j>j_2\geq0}2^{j(-\frac{1}{2}+)}
2^{j\max(0,\frac{1}{4}-s)}2^{j_1/2}2^{j_2/2}\|g_j\|_{L^2}
\|\phi_{1,j_1}\|_{L^2}\|\phi_{2,j_2}\|_{L^2}\\
&\lesssim\|\phi_1\|_{X_{0,\frac{1}{2}}}\|\phi_2\|_{X_{0,\frac{1}{2}}}.
\end{split}
\end{equation*}

\vspace{0.3cm}

\noindent {\bf Subcase A2:} $\max(j,j_2)\leq 2m_1$.

\vspace{0.3cm}

\noindent {\bf Subsubcase 1:}
$\Big|\frac{\mu_1}{\xi_1}-\frac{\mu_2}{\xi_2}\Big|^2\leq
\frac{1}{2}|\xi_1+\xi_2|^2|5(\xi_1^2+\xi_1\xi_2+\xi_2^2)-3\alpha|.$

\vspace{0.3cm}

\noindent In this case, the resonant interaction does not happen. By
the definition of resonance function, we can get a useful estimate.
Writing
\begin{multline}\label{5KP-Iresonance}
\tau-\omega(\xi_1+\xi_2,\mu_1+\mu_2)-\tau_1+\omega(\xi_1,\mu_1)-\tau_2+\omega(\xi_2,\mu_2)\\
=-\frac{\xi_1\xi_2}{(\xi_1+\xi_2)}\left((\xi_1+\xi_2)^2\Big[5(\xi_1^2+\xi_1\xi_2+\xi_2^2)-3\alpha\Big]-
\Big(\frac{\mu_1}{\xi_1}-\frac{\mu_2}{\xi_2}\Big)^2\right),
\end{multline}
since $<\tau-\omega(\xi,\mu)>\sim 2^j$,
$<\tau_1-\omega(\xi_1,\mu_1)>\sim 2^{j_1}$ and
$<\tau_2-\omega(\xi_2,\mu_2)>\sim 2^{j_2}$, we have
$2^{\max(j,j_1,j_2)}\geq |\xi_1|^4|\xi_1+\xi_2|\geq |\xi_1|^4\sim
2^{4m_1}.$ It is clear that we have $j_1=\max(j,j_1,j_2)\geq 4m_1.$
Thus $|\xi_1+\xi_2|\lesssim 2^{j_1-4m_1}.$ We now choose $\delta>0$
such that $\min(\frac{1}{4},s)>\delta>0$ and
$1-4\delta+\frac{1}{2}>\frac{1}{2}+$. Therefor
\begin{equation*}
\begin{split}
(\ref{1})&\lesssim \sum_{j_1\geq0}\sum_{\frac{j_1}{4}\geq m_1\geq
0}\sum_{2m_1\geq
\max(j_2,j)\geq0}2^{j(-\frac{1}{2}+)}2^{(j_1-4m_1)(\frac{1}{2}-2\delta)}
\||D_x|^{\frac{1}{4}}g_j^\vee\|_{L^4_T(L^4_{x,y})}
\|\phi_{1,j_1,m_1}\|_{L^2_t(L^2_{x,y})}\\
&\quad\quad\quad\quad\times\||D_x|^\frac{1}{4}\phi_{2,j_2,m_1}\|_{L^4_T(L^4_{x,y})}\\
&\lesssim\sum_{j_1\geq0}\sum_{m_1\geq 0}\sum_{2m_1\geq
\max(j_2,j)\geq0}2^{j(-\frac{1}{2}+)}2^{j/2}2^{(j_1-4m_1)(\frac{1}{2}-2\delta)}2^{j_2/2}\|g_j\|_{L^2}
\|\phi_{1,j_1,m_1}\|_{L^2}\|\phi_{2,j_2,m_1}\|_{L^2}\\
&\lesssim\|\phi_1\|_{X_{0,\frac{1}{2}}}\|\phi_2\|_{X_{0,\frac{1}{2}}}.
\end{split}
\end{equation*}

\vspace{0.3cm}

\noindent {\bf Subsubcase 2:}
$\Big|\frac{\mu_1}{\xi_1}-\frac{\mu_2}{\xi_2}\Big|^2>
\frac{1}{2}|\xi_1+\xi_2|^2|5(\xi_1^2+\xi_1\xi_2+\xi_2^2)-3\alpha|.$

\vspace{0.3cm}

\noindent In this case, the resonant interaction may happen. We have
to do some delicate estimates. Let
$\theta_1=\tau_1-\omega(\xi_1,\mu_1)$ and
$\theta_2=\tau_2-\omega(\xi_2,\mu_2)$, we can control (\ref{1}) by
\begin{equation}\label{eq:3.14}
\begin{split}
&\sum_{j_1,m_1\geq0}\sum_{2m_1>\max(j,j_2)\geq0}2^{j(-\frac{1}{2}+)}2^{m_1(1-2s)}\int
g_j(\xi,\mu,\theta_1+\omega(\xi_1,\mu_1)+\theta_2+\omega(\xi_2+\mu_2))
\\
&\times\chi_j(\theta_1+\theta_2+\omega(\xi_1,\mu_2)+\omega(\xi_2+\mu_2)-
\omega(\xi_1+\xi_2,\mu_1+\mu_2))
\\
&\times\hat{\phi}_{1,j_1,m_1}(\xi_1,\mu_1,\theta_1+\omega(\xi_1,\mu_1))
\hat{\phi}_{2,j_2,m_1}(\xi_2,\mu_2,\theta_2+\omega(\xi_2,\tau_2))d\xi_1d\mu_1d\xi_2d\mu_2d\theta_1d\theta_2.
\end{split}
\end{equation}

\noindent We divide above quantity into two cases.

\vspace{0.3cm}

\noindent {\bf Subsubsubcase 2a:}
$\Big|5(\xi_1^4-\xi_2^4)-3\alpha(\xi_1^2-\xi_2^2)-
\Big[\Big(\frac{\mu_1}{\xi_1}\Big)^2-\Big(\frac{\mu_2}{\xi_2}\Big)^2\Big]\Big|>1.$

\vspace{0.3cm}

\noindent We change the variables by
\begin{equation} \label{eq:3.5}
\left\{ \begin{aligned}
        u&=\xi_1+\xi_2 \\
        v&=\mu_1+\mu_2 \\
        w&=\theta_1+\omega(\xi_1,\mu_1)+\theta_2+\omega(\xi_2+\mu_2)\\
        \mu_2&=\mu_2.
        \end{aligned} \right.
\end{equation}
The determinant of the Jacobian associating to this change of
variables is
\begin{equation}
\begin{split}
J_{\mu}&=\left|\begin{array}{cccc}
          1 & 1 & 0 & 0 \\
          \,& \,& \,& \,\\
          0 & 0 & 1 & 1 \\
          \,& \,& \,& \,\\
          5\xi^4_1-3\alpha\xi_1^2-\frac{\mu_1^2}{\xi_1^2} &
5\xi^4_2-3\alpha\xi_2^2-\frac{\mu_2^2}{\xi_2^2} & 2\frac{\mu_1}{\xi_1} & 2\frac{\mu_2}{\xi_2} \\
          \,& \,& \,& \,\\
          0 & 0 & 0 & 1
        \end{array}\right|\\
&=5(\xi_1^4-\xi_2^4)-3\alpha(\xi_1^2-\xi_2^2)-
\Big[\Big(\frac{\mu_1}{\xi_1}\Big)^2-\Big(\frac{\mu_2}{\xi_2}\Big)^2\Big]
\end{split}\end{equation}
Thus $|J_\mu|>1.$ We have
\begin{equation}\label{eq:3.6}
\begin{split}
(\ref{1})&\lesssim\sum_{j_1,m_1\geq0}\sum_{2m_1>\max(j,j_2)\geq0}2^{j(-\frac{1}{2}+)}2^{m_1(1-2s)}\int
g_j\chi_j(u,v,w)\\
&
\times|J_\mu|^{-1}H(u,v,w,\mu_2,\theta_1,\theta_2)dudvdwd\mu_2d\theta_1d\theta_2.
\end{split}
\end{equation}
Here $H(u,v,w,\mu_2,\theta_1,\theta_2)$ denotes the transformation
of $\hat\phi_{1,j_1,m_1}\hat\phi_{2,j_2,m_1}$. For fixed
$\theta_1,\theta_2,$ $\xi_1,\xi_2,\mu_1$, we calculate the set
length where the free variable $\mu_2$ can range. More precisely, we
denote this length by $\Delta_{\mu_2}$. Let
$$f(\mu)=\theta_1+\theta_2-\frac{\xi_1\xi_2}{(\xi_1+\xi_2)}\left((\xi_1+\xi_2)^2
\Big[5(\xi_1^2+\xi_1\xi_2+\xi_2^2)-3\alpha\Big]-
\Big(\frac{\mu_1}{\xi_1}-\frac{\mu}{\xi_2}\Big)^2\right),$$ we have
$|f'(\mu_2)|>|\xi_1|^2$. Since
\begin{equation}\label{eq:3.11}\begin{split}&|\theta_1+\theta_2+\omega(\xi_1,\mu_2)+\omega(\xi_2+\mu_2)-
\omega(\xi_1+\xi_2,\mu_1+\mu_2)|\\
=&\left|\theta_1+\theta_2-\frac{\xi_1\xi_2}{(\xi_1+\xi_2)}\left((\xi_1+\xi_2)^2
\Big[5(\xi_1^2+\xi_1\xi_2+\xi_2^2)-3\alpha\Big]-
\Big(\frac{\mu_1}{\xi_1}-\frac{\mu_2}{\xi_2}\Big)^2\right)\right|\sim
2^j,\end{split}\end{equation}  This means that we have
$\Delta_{\mu_2}\leq 2^{j-2m_1}$. By Cauchy-Schwarz and the inverse
change of variables we have

\begin{equation*}
\begin{split}
&\int g_j\chi_j(u,v,w)
|J_\mu|^{-1}H(u,v,w,\mu_2,\theta_1,\theta_2)dudvdwd\mu_2d\theta_1d\theta_2\\
\lesssim&2^{j/2-m_1}\int g_j\chi_j(u,v,w)
\left(\int|J_\mu|^{-2}H^2(u,v,w,\mu_2,\theta_1,\theta_2)d\mu_2\right)^{1/2}dudvdwd\theta_1d\theta_2\\
\lesssim&2^{j/2-m_1}\|g_j\chi_j\|_{L^2}\int
\left(\int|J_\mu|^{-2}H^2(u,v,w,\mu_2,\theta_1,\theta_2)dudvdwd\mu_2\right)^{1/2}d\theta_1d\theta_2\\
\lesssim&2^{j/2-m_1}\|g_j\chi_j\|_{L^2}\int
\left(\int|J_\mu|^{-1}H^2(u,v,w,\mu_2,\theta_1,\theta_2)dudvdwd\mu_2\right)^{1/2}d\theta_1d\theta_2\\
=&2^{j/2-m_1}\|g_j\chi_j\|_{L^2}\int
\Big(\int\prod_{i=1,2}\hat\phi^2_{i,j_i,m_1}(\xi_i,\mu_i,\theta_i+\omega(\xi_i,\mu_i))
d\xi_1d\mu_1d\xi_2d\mu_2\Big)^{1/2}d\theta_1d\theta_2\\
\lesssim&2^{j/2-m_1}2^{j_1/2}2^{j_2/2}\|g_j\|_{L^2}\|\phi_{1,j_1,m_1}\|_{L^2}\|\phi_{2,j_2,m_1}\|_{L^2}.
\end{split}
\end{equation*}
It follows from (\ref{eq:3.6}) that
\begin{equation*}
\begin{split}
(\ref{1})&\lesssim
\sum_{m_1,j_1\geq0}\sum_{2m_1>\max(j_2,j)\geq0}2^{j(-\frac{1}{2}+)}2^{j/2-m_1}2^{m_1(1-2s)}
2^{j_1/2}2^{j_2/2}\|g_j\|_{L^2}
\|\phi_{1,j_1,m_1}\|_{L^2}\|\phi_{2,j_2,m_1}\|_{L^2}.\\
&\lesssim\|\phi_1\|_{X_{0,\frac{1}{2}}}\|\phi_2\|_{X_{0,\frac{1}{2}}}.
\end{split}
\end{equation*}

\vspace{0.3cm}

\noindent {\bf Subsubsubcase 2b:}
$\Big|5(\xi_1^4-\xi_2^4)-3\alpha(\xi_1^2-\xi_2^2)-
\Big[\Big(\frac{\mu_1}{\xi_1}\Big)^2-\Big(\frac{\mu_2}{\xi_2}\Big)^2\Big]\Big|\leq1.$

\vspace{0.3cm}

\noindent In this case the change of variables above cannot be used
because the determinant of Jacobian may become zero. We consider
 the change of variables instead:
\begin{equation} \label{eq:3.7}
\left\{ \begin{aligned}
        u&=\xi_1+\xi_2 \\
        v&=\mu_1+\mu_2 \\
        w&=\theta_1+\omega(\xi_1,\mu_1)+\theta_2+\omega(\xi_2+\mu_2)\\
        \xi_1&=\xi_1.
        \end{aligned} \right.
\end{equation}
In this case the determinant of Jacobian $J_\xi$ is given by
\begin{equation}
\begin{split}
J_{\xi}&=\left|\begin{array}{cccc}
          1 & 1 & 0 & 0 \\
          \,& \,& \,& \,\\
          0 & 0 & 1 & 1 \\
          \,& \,& \,& \,\\
          5\xi^4_1-3\alpha\xi_1^2-\frac{\mu_1^2}{\xi_1^2} &
5\xi^4_2-3\alpha\xi_2^2-\frac{\mu_2^2}{\xi_2^2} & 2\frac{\mu_1}{\xi_1} & 2\frac{\mu_2}{\xi_2} \\
          \,& \,& \,& \,\\
          1 & 0 & 0 & 0
        \end{array}\right|\\
&=2\Big(\frac{\mu_1}{\xi_1}-\frac{\mu_2}{\xi_2}\Big).
\end{split}\end{equation}
An easy calculation shows that $|J_\xi|\gtrsim|\xi_1|.$ In this
time, we fixed $\theta_1,\theta_2,\xi_2,\mu_1,\mu_2$, and calculate
the interval length $\Delta_{\xi_1}$ of the free variable $\xi_1$.
Set
\begin{equation}\label{eq:3.18}h(\xi)=5(\xi^4-\xi_2^4)-3\alpha(\xi^2-\xi_2^2)-\Big[\Big(\frac{\mu_1}{\xi}\Big)^2-
\Big(\frac{\mu_2}{\xi_2}\Big)^2\Big].
\end{equation}
We compute
\begin{equation}\label{eq:3.17}
h'(\xi)=20\xi^3-6\alpha\xi+2(\mu_1/\xi)^2\xi^{-1}.
\end{equation}
Since now $h'(\xi_1)$ has the same sign as $\xi_1$, we have
$|h'(\xi_1)|\gtrsim|\xi_1|^3$. Thus $\Delta_{\xi_1}\lesssim
2^{-3m_1}.$ Remind
\begin{equation}\label{eq:3.8}
\begin{split}
(\ref{1})&\lesssim\sum_{j_1,m_1\geq0}\sum_{2m_1>\max(j,j_2)\geq0}2^{j(-\frac{1}{2}+)}2^{m_1(1-2s)}\int
g_j\chi_j(u,v,w)\\
&
\times|J_\xi|^{-1}H(u,v,w,\xi_1,\theta_1,\theta_2)dudvdwd\xi_1d\theta_1d\theta_2.
\end{split}
\end{equation}
Again denote by $H(u,v,w,\xi_1,\theta_1,\theta_2)$ the
transformation of $\prod_{i=1,2}\hat\phi_{i,j_i,m_1}$ under the
change of variables (\ref{eq:3.7}).
\begin{equation*}
\begin{split}
&\int g_j\chi_j(u,v,w)
|J_\xi|^{-1}H(u,v,w,\xi_1,\theta_1,\theta_2)dudvdwd\xi_1d\theta_1d\theta_2\\
\lesssim&2^{-\frac{3}{2}m_1}\int g_j\chi_j(u,v,w)
\left(\int|J_\xi|^{-2}H^2(u,v,w,\xi_1,\theta_1,\theta_2)d\xi_1\right)^{1/2}dudvdwd\theta_1d\theta_2\\
\lesssim&2^{-\frac{3}{2}m_1}\|g_j\chi_j\|_{L^2}\int
\left(\int|J_\xi|^{-2}H^2(u,v,w,\xi_1,\theta_1,\theta_2)dudvdwd\xi_1\right)^{1/2}d\theta_1d\theta_2\\
\lesssim&2^{-2m_1}\|g_j\chi_j\|_{L^2}\int
\left(\int|J_\xi|^{-1}H^2(u,v,w,\xi_1,\theta_1,\theta_2)dudvdwd\xi_1\right)^{1/2}d\theta_1d\theta_2\\
=&2^{-2m_1}\|g_j\chi_j\|_{L^2}\int
\Big(\int\prod_{i=1,2}\hat\phi^2_{i,j_i,m_1}(\xi_i,\mu_i,\theta_i+\omega(\xi_i,\mu_i))
d\xi_id\mu_i\Big)^{1/2}d\theta_1d\theta_2\\
\lesssim&2^{-2m_1}2^{j_1/2}2^{j_2/2}\|g_j\|_{L^2}\|\phi_{1,j_1,m_1}\|_{L^2}\|\phi_{2,j_2,m_1}\|_{L^2}.
\end{split}
\end{equation*}
Thus
\begin{equation*}
\begin{split}
(\ref{1})&\lesssim
\sum_{m_1,j_1\geq0}\sum_{2m_1>\max(j_2,j)\geq0}2^{j(-\frac{1}{2}+)}2^{-2m_1}2^{m_1(1-2s)}2^{j_1/2}2^{j_2/2}\|g_j\|_{L^2}
\|\phi_{1,j_1,m_1}\|_{L^2}\|\phi_{2,j_2,m_1}\|_{L^2}.\\
&\lesssim\|\phi_1\|_{X_{0,\frac{1}{2}}}\|\phi_2\|_{X_{0,\frac{1}{2}}}.
\end{split}
\end{equation*}

\vspace{0.3cm}

\noindent {\bf Case B:}
$|\xi_1+\xi_2|^2\leq\frac{|\mu_1+\mu_2|}{|\xi_1+\xi_2|}.$

\vspace{0.3cm}

\noindent If $\frac{|\mu_1+\mu_2|}{|\xi_1+\xi_2|}\lesssim 1$, this
case can also be proved by (\ref{eq:3.9}). Thus we need only to
consider the case $\frac{|\mu_1+\mu_2|}{|\xi_1+\xi_2|}\gtrsim 1$.

\vspace{0.3cm}

\noindent {\bf Subcase B1:} $|\mu_1|\leq|\mu_2|$.

\vspace{0.3cm}

\noindent {\bf Subsubcase B1a:} $\frac{|\mu_2|}{|\xi_2|}\leq
|\xi_2|^2$.

In this case, $|\mu_2|\leq |\xi_2|^3$ and $|\mu_1+\mu_2|\leq
2|\xi_2|^3$. We now decompose $|\xi_1|\sim|\xi_2|\sim 2^{m_2}$. Then
in this case we bound (\ref{2}) by
\begin{equation}\label{eq:3.12}
\begin{split}
\sum_{j_1,j_2,j\geq0}\sum_{m_2\geq0}&2^{j(-\frac{1}{2}+)}
\int_{A_2}g_j(\xi,\mu,\tau)\chi_j(\tau-\omega(\xi,\mu))\chi_2(\xi,\mu)\\
&\times |\xi_1+\xi_2|^{1-s}2^{-m_2s}
\hat\phi_{1,j_1,m_2}(\xi_1,\mu_1,\tau_1)
\hat\phi_{2,j_2,m_2}(\xi_2,\mu_2,\tau_2).
\end{split}
\end{equation}

We first consider the case that two high frequency waves interaction
forms a very low wave i.e. $|\xi_1+\xi_2|<1$.
\begin{equation*}
\begin{split}
(\ref{2})&\lesssim \sum_{j,m_2\geq0}\sum_{
j_1,j_2\geq0}2^{j(-\frac{1}{2}+)}2^{m_2(-\frac{1}{2}-s)}\||D_x|^{\frac{1}{4}}\phi_{2,j_2,m_2}\|_{L^4_T(L^4_{x,y})}
\||D_x|^\frac{1}{4}\phi_{1,j_1,m_2}\|_{L^4_T(L^4_{x,y})}\\
&\quad\quad\quad\quad\times\|g_j\|_{L^2}\\
&\lesssim\sum_{j,m_2\geq0}\sum_{
j_1,j_2\geq0}2^{j(-\frac{1}{2}+)}2^{m_2(-\frac{1}{2}-s)}2^{j_1/2}2^{j_2/2}\|g_j\|_{L^2}
\|\phi_{1,j_1,m_2}\|_{L^2}\|\phi_{2,j_2,m_2}\|_{L^2}\\
&\lesssim\|\phi_1\|_{X_{0, \frac{1}{2}}}\|\phi_2\|_{X_{0,
\frac{1}{2}}}.
\end{split}
\end{equation*}
For the case $|\xi_1+\xi_2|>1$, one can use the argument in case A
again to obtain
\begin{equation*}
(\ref{2})\lesssim\|\phi_1\|_{X_{0,\frac{1}{2}+}}\|\phi_2\|_{X_{0,\frac{1}{2}}}
+\|\phi_1\|_{X_{0,\frac{1}{2}}}\|\phi_2\|_{X_{0,\frac{1}{2}+}}.
\end{equation*}

\vspace{0.3cm}

\noindent {\bf Subsubcase B1b:}  $\frac{|\mu_2|}{|\xi_2|}\geq
|\xi_2|^2$.

\vspace{0.3cm}

We bound (\ref{2}) by
\begin{equation*}
\begin{split}
\sum_{j_1,j_2,j\geq0}&2^{j(-\frac{1}{2}+)}
\int_{A_2}g_j(\xi,\mu,\tau)\chi_j(\tau-\omega(\xi,\mu))\chi_2(\xi,\mu)\\
&\times |\xi_1+\xi_2|^{1-s}|\mu_1+\mu_2|^s
\frac{\hat\phi_{1,j_1}(\xi_1,\mu_1,\tau_1)}{|\xi_1|^{2s}}
\frac{\hat\phi_{2,j_2}(\xi_2,\mu_2,\tau_2)}{\Big(\frac{|\mu_2|}{|\xi_2|}\Big)^s}.
\end{split}
\end{equation*}
Of course a dyadic decomposition with respect to $\xi_1$ is also
needed. Let $|\xi_1|\sim 2^{m_1}$, we bound (\ref{2}) by
\begin{equation*}
\begin{split}
\sum_{j_1,j_2,j\geq0}\sum_{m_1\geq0}&2^{j(-\frac{1}{2}+)}
\int_{A_2}g_j(\xi,\mu,\tau)\chi_j(\tau-\omega(\xi,\mu))\chi_2(\xi,\mu)\\
&\times |\xi_1+\xi_2|^{1-s}2^{-m_1s}
\hat\phi_{1,j_1,m_1}(\xi_1,\mu_1,\tau_1)
\hat\phi_{2,j_2}(\xi_2,\mu_2,\tau_2).
\end{split}
\end{equation*}
Then one can also run the above argument by considering two cases:
$|\xi_1+ \xi_2|\leq 1$ and $|\xi_1+\xi_2|\geq 1.$ We now give some
details in the case $|\xi_1+\xi_2|\geq 1$.

\vspace{0.3cm}

\noindent {\bf Subsubsubcase 1:} $\max(j,j_2)\geq 2m_1.$

\vspace{0.3cm}

\noindent If $j\leq j_2$ and $0<s\leq\frac{1}{4}$, we choose
$0<\delta<\frac{1}{2}$
\begin{equation*}
\begin{split}
(\ref{2})&\lesssim\sum_{j_1,j_2\geq0}\sum_{j_2\geq
\max(j,2m_1)\geq0}2^{j(-\frac{1}{2}+)}2^{m_1(\frac{1}{2}-2s)}\||D_x|^{\delta}g_j^\vee\|
_{L^\frac{1}{\delta}_T(L^\frac{2}{1-2\delta}_{x,y})}
\||D_x|^{\frac{1}{2}-\delta}\phi_{1,j_1,m_1}\|_{L^\frac{2}{1-2\delta}_T(L^\frac{1}{\delta}_{x,y})}\\
&\quad\quad\quad\quad\times\|\phi_{2,j_2}\|_{L^2}\\
&\lesssim \sum_{j_1,j_2\geq0}\sum_{j_2\geq
\max(j,2m_1)\geq0}2^{j(-\frac{1}{2}+)}2^{m_1(\frac{1}{2}-2s)}2^{j/2}2^{j_1/2}
\|g_j\|_{L^2}
\|\phi_{1,j_1,m_1}\|_{L^2}\|\phi_{2,j_2}\|_{L^2}\\
&\lesssim\|\phi_1\|_{X_{0,\frac{1}{2}}}\|\phi_2\|_{X_{0,\frac{1}{2}}}.
\end{split}
\end{equation*}

\noindent If $j\leq j_2$ and $\frac{1}{4}<s\leq1$, we bound
(\ref{2}) by
\begin{equation*}
\begin{split}
&\sum_{j_1,j_2\geq0}\sum_{j_2\geq
\max(j,2m_1)\geq0}2^{j(-\frac{1}{2}+)}2^{m_1(\frac{1}{2}-2s)}\|g_j\|
_{L^2} \||D_x|^{\frac{1}{4}}\phi_{1,j_1,m_1}\|_{L^4_T(L^4_{x,y})}
\||D_x|^{\frac{1}{4}}\phi_{2,j_2}\|_{L^4_T(L^4_{x,y})}\\
&\lesssim \sum_{j_1,j_2\geq0}\sum_{j_2\geq
\max(j,2m_1)\geq0}2^{j(-\frac{1}{2}+)}2^{m_1(\frac{1}{2}-2s)}2^{j_2/2}2^{j_1/2}
\|g_j\|_{L^2}
\|\phi_{1,j_1,m_1}\|_{L^2}\|\phi_{2,j_2}\|_{L^2}\\
&\lesssim\|\phi_1\|_{X_{0,\frac{1}{2}}}\|\phi_2\|_{X_{0,\frac{1}{2}}}.
\end{split}
\end{equation*}

\noindent If $j>j_2$, we also have
\begin{equation*}
\begin{split}
(\ref{2})&\lesssim
\sum_{j,j_1\geq0}\sum_{j>\max(j_2,2m_1)\geq0}2^{j(-\frac{1}{2}+)}2^{m_1(\frac{1}{2}-2s)}
\||D_x|^{\frac{1}{4}}\phi_{2,j_2}\|_{L^4_T(L^4_{x,y})}
\||D_x|^\frac{1}{4}\phi_{1,j_1,m_1}\|_{L^4_T(L^4_{x,y})}\\
&\quad\quad\quad\quad\times\|g_j\|_{L^2}\\
&\lesssim\sum_{j,j_1\geq0}\sum_{j>\max(j_2,2m_1)\geq0}2^{j(-\frac{1}{2}+)}
2^{j\max(0,\frac{1}{4}-s)}2^{j_1/2}2^{j_2/2}\|g_j\|_{L^2}
\|\phi_{1,j_1,m_1}\|_{L^2}\|\phi_{2,j_2}\|_{L^2}\\
&\lesssim\|\phi_1\|_{X_{0,\frac{1}{2}}}\|\phi_2\|_{X_{0,\frac{1}{2}}}.
\end{split}
\end{equation*}

\vspace{0.3cm}

\noindent{\bf Subsubsubcase 2:} $\max(j,j_2)<2m_1$.

In this case, the argument in case A can still work by replacing the
$\frac{1}{4}$ derivative on $g_j$ by $\frac{1}{4}$ derivative on
$\phi_1$ when $\frac{1}{4}<s\leq1$, and
$\Big|\frac{\mu_1}{\xi_1}-\frac{\mu_2}{\xi_2}\Big|^2>
\frac{1}{2}|\xi_1+\xi_2|^2[5(\xi_1^2+\xi_1\xi_2+\xi_2^2)-3\alpha].$
We omit the rest details.

\vspace{0.3cm}

\noindent {\bf Subcase B2:} $|\mu_1|\geq|\mu_2|$. One can use the
same argument presented in Case B1  by inverting the role of
$(\xi_1,\mu_1)$ and $(\xi_2,\mu_2)$.

\vspace{5mm}

\noindent {\bf High-Low interaction}

\vspace{0.3cm}

\noindent In this domain, the estimates will be more complicated.
Roughly speaking, we will consider the term
$\frac{|\mu_2|}{|\xi_2|}$ in two regions,
$\frac{|\mu_2|}{|\xi_2|}\gtrsim\max(|\xi_1|^2,\frac{|\mu_1|}{|\xi_1|})$
and
$\frac{|\mu_2|}{|\xi_2|}\ll\max(|\xi_1|^2,\frac{|\mu_1|}{|\xi_1|}).$

\vspace{0.3cm}

\noindent Region I:
$\frac{|\mu_2|}{|\xi_2|}\gtrsim\max\Big(|\xi_1|^2,\frac{|\mu_1|}{|\xi_1|}\Big)$.

\vspace{0.3cm}

\noindent {\bf Case A:} $|\xi_1+\xi_2|^2\gtrsim
\frac{|\mu_1+\mu_2|}{|\xi_1+\xi_2|}.$

\vspace{0.3cm}

\noindent {\bf Subcase A1:}
$|\xi_1|^2\gtrsim\frac{|\mu_1|}{|\xi_1|}.$

\vspace{0.3cm}

We apply the dyadic decomposition with respect to
$|\xi|\sim|\xi_1|\sim 2^{m_1}$ to bound (\ref{1}) by
\begin{equation}\label{eq:3.13}
\begin{split}
\sum_{j_1,j_2,j\geq0}\sum_{m_1\geq0}&2^{j(-\frac{1}{2}+)}\int_{A_3}g_j(\xi,\mu,\tau)
\chi_j(\tau-\omega(\xi,\mu))\chi_1(\xi,\mu)\\
&\times 2^{m_1}\hat\phi_{1,j_1,m_1}(\xi_1,\mu_1,\tau_1)
\frac{\hat\phi_{2,j_2}(\xi_2,\mu_2,\tau_2)}{\Big(\frac{|\mu_2|}{|\xi_2|}\Big)^s}.
\end{split}
\end{equation}

\vspace{0.3cm}

\noindent {\bf Subsubcase A1a:} $|\xi_2|\geq 1$ and $\max(j,j_2)\geq
\frac{3}{2}m_1$.

\vspace{0.3cm}

\noindent We first notice that
\begin{equation*}
\begin{split}
(\ref{1})&\lesssim\sum_{j_1,m_1\geq0}\sum_{\max(j,j_2)\geq\frac{3}{2}m_1\geq0}2^{j(-\frac{1}{2}+)}
\int_{A_3}g_j(\xi,\mu,\tau)\chi_j(\tau-\omega(\xi,\mu))\chi_1(\xi,\mu)\\
&\quad\quad\quad\quad\times
2^{m_1(1-2s)}\hat\phi_{1,j_1,m_1}(\xi_1,\mu_1,\tau_1)
\hat\phi_{2,j_2}(\xi_2,\mu_2,\tau_2).
\end{split}
\end{equation*}
If $j\geq j_2$,
\begin{equation*}
\begin{split}
(\ref{1})&\lesssim\sum_{j_1,j\geq0}\sum_{j\geq
\max(j_2,\frac{3}{2}m_1)\geq0}2^{j(-\frac{1}{2}+)}2^{m_1(\frac{3}{4}-2s)}
\|g_j\|_{L^2}\||D_x|^\frac{1}{4}\phi_{1,j_1,m_1}\|_{L^4_T(L^4_{x,y})}
\||D_x|^\frac{1}{4}\phi_{2,j_2}\|_{L^4_T(L^4_{x,y})}\\
&\lesssim\sum_{j_1,j\geq0}\sum_{j\geq
\max(j_2,\frac{3}{2}m_1)\geq0}2^{j(-\frac{1}{2}+)}2^{m_1(\frac{3}{4}-2s)}
\|g_j\|_{L^2}2^{j_1/2}2^{j_2/2}\|\phi_{1,j_1,m_1}\|_{L^2}\|\phi_{2,j_2}\|_{L^2}.\\
&\lesssim
\|\phi_1\|_{X_{0,\frac{1}{2}}}\|\phi_2\|_{X_{0,\frac{1}{2}}}.
\end{split}
\end{equation*}
If $j<j_2$,
\begin{equation*}
\begin{split}
(\ref{1})&\lesssim\sum_{j_1,j_2\geq0}\sum_{j_2\geq\max(\frac{3}{2}m_1,j)\geq0}2^{j(-\frac{1}{2}+)}2^{m_1(\frac{1}{2}-2s)}
\||D_x|^\frac{1}{4}g_j\|_{L^4_T(L^4_{x,y})}\||D_x|^\frac{1}{4}\phi_{1,j_1,m_1}\|_{L^4_T(L^4_{x,y})}\\
&\quad\quad\times\|\phi_{2,j_2}\|_{L^2}\\
&\lesssim\sum_{j_1,j_2\geq0}\sum_{j_2\geq
\max(j,\frac{3}{2}m_1)\geq0}2^{j(-\frac{1}{2}+)}2^{j/2}2^{m_1(\frac{1}{2}-2s)}2^{j_1/2}
\|g_j\|_{L^2}\|\phi_{1,j_1,m_1}\|_{L^2}\|\phi_{2,j_2}\|_{L^2}.\\
&\lesssim
\|\phi_1\|_{X_{0,\frac{1}{2}}}\|\phi_2\|_{X_{0,\frac{1}{2}+}}.
\end{split}
\end{equation*}

\vspace{0.3cm}

\noindent {\bf Subsubcase A1b:} $|\xi_2|\geq 1$ and $\max(j,j_2)\leq
\frac{3}{2}m_1.$

\vspace{0.3cm}

\noindent As in the estimates in the high frequency interaction
domain, it is necessary to consider more cases.

\vspace{0.3cm}

\noindent {\bf Subsubsubcase 1:}
$\Big|\frac{\mu_1}{\xi_1}-\frac{\mu_2}{\xi_2}\Big|^2<\frac{1}{2}|\xi_1+\xi_2|^2
|5(\xi_1^2+\xi_1\xi_2+\xi_2^2)-3\alpha|.$

\vspace{0.3cm}

\noindent In this case, the resonant interaction does not happen. By
inequality (\ref{5KP-Iresonance}) and $|\xi_2|>1$, we get that
$j_1=\max(j,j_1,j_2)\geq 4m_1.$ We now bound (\ref{1}) by
\begin{equation*}
\begin{split}
&\sum_{j_1\geq0}\sum_{j_1\geq4m_1\geq0}\sum_{\frac{3}{2}m_1\geq\max(j,j_2)\geq0}2^{j(-\frac{1}{2}+)}
2^{m_1(\frac{3}{4}-2s)}
\||D_x|^\frac{1}{4}g_j^\vee\|_{L^4_T(L^4_{x,y})}\|\phi_{1,j_1,m_1}\|_{L^2}\\
&\quad\quad\times\||D_x|^\frac{1}{4}\phi_{2,j_2}\|_{L^4_T(L^4_{x,y})}\\
&\lesssim\sum_{j_1\geq0}\sum_{j_1\geq4m_1\geq0}\sum_{\frac{3}{2}m_1\geq\max(j,j_2)\geq0}
2^{j(-\frac{1}{2}+)}2^{j/2}2^{m_1(\frac{3}{4}-2s)}2^{j_2/2}
\|g_j\|_{L^2}\|\phi_{1,j_1,m_1}\|_{L^2}\|\phi_{2,j_2}\|_{L^2}.\\
&\lesssim
\|\phi_1\|_{X_{0,\frac{1}{2}+}}\|\phi_2\|_{X_{0,\frac{1}{2}}}.
\end{split}
\end{equation*}

\vspace{0.3cm}

\noindent {\bf Subsubsubcase 2:}
$\Big|\frac{\mu_1}{\xi_1}-\frac{\mu_2}{\xi_2}\Big|^2\geq\frac{1}{2}|\xi_1+\xi_2|^2
|5(\xi_1^2+\xi_1\xi_2+\xi_2^2)-3\alpha|$.

\vspace{0.3cm}

\noindent We need to divide the estimate into two cases:
$$\Big|5(\xi_1^4-\xi_2^4)-3\alpha(\xi_1^2-\xi_2^2)-
\Big[\Big(\frac{\mu_1}{\xi_1}\Big)^2-\Big(\frac{\mu_2}{\xi_2}\Big)^2\Big]\Big|\geq1$$
and $$\Big|5(\xi_1^4-\xi_2^4)-3\alpha(\xi_1^2-\xi_2^2)-
\Big[\Big(\frac{\mu_1}{\xi_1}\Big)^2-\Big(\frac{\mu_2}{\xi_2}\Big)^2\Big]\Big|<1.$$
As we known, the first inequality means the determinant of the
Jacobian of the change of variables (\ref{eq:3.5}) $|J_\mu|\geq1$.
So we get
\begin{equation*}
\begin{split}
(\ref{1})&\lesssim
\sum_{m_1,j_1\geq0}\sum_{\frac{3}{2}m_1>\max(j_2,j)\geq0}2^{j(-\frac{1}{2}+)}2^{j/2-m_1}2^{m_1(1-2s)}2^{j_1/2}2^{j_2/2}\|g_j\|_{L^2}
\|\phi_{1,j_1,m_1}\|_{L^2}\|\phi_{2,j_2}\|_{L^2}.\\
&\lesssim\|\phi_1\|_{X_{0,\frac{1}{2}}}\|\phi_2\|_{X_{0,\frac{1}{2}}}.
\end{split}
\end{equation*}
For the second inequality, we recur to the change of variables
(\ref{eq:3.7}). In the same way, we get
\begin{equation*}
\begin{split}
(\ref{1})&\lesssim
\sum_{m_1,j_1\geq0}\sum_{2m_1>\max(j_2,j)\geq0}2^{j(-\frac{1}{2}+)}2^{-2m_1}2^{m_1(1-2s)}2^{j_1/2}2^{j_2/2}\|g_j\|_{L^2}
\|\phi_{1,j_1,m_1}\|_{L^2}\|\phi_{2,j_2}\|_{L^2}.\\
&\lesssim\|\phi_1\|_{X_{0,\frac{1}{2}}}\|\phi_2\|_{X_{0,\frac{1}{2}}}.
\end{split}
\end{equation*}

\vspace{0.3cm}

\noindent {\bf Subsubcase A1c:} $|\xi_2|<1$.

\vspace{0.3cm}

\noindent If $|\mu_2|\lesssim1$, since
$\frac{|\mu_2|}{|\xi_2|}\gtrsim |\xi_1|^2$, we have that
$|\xi_2|\lesssim |\xi_1|^{-2}$. Thus we bound (\ref{1}) by
\begin{equation*}
\begin{split}
&\sum_{j,j_1,j_2\geq 0}\sum_{m_1\geq
0}2^{j(-\frac{1}{2}+)}2^{m_1(1-2s)}\|g_j\|_{L^2}\|\phi_{1,j_1,m_1}\|_{L^\infty_T(L^2_{x,y})}
\|(m(\xi_2,\mu_2)\hat\phi_{2,j_2})^\vee\|_{L^2_T(L^\infty_{x,y})}\\
\lesssim&\sum_{j,j_1,j_2\geq 0}\sum_{m_1\geq
0}2^{j(-\frac{1}{2}+)}2^{m_1(1-2s)}\|g_j\|_{L^2}2^{j_1/2}\|\phi_{1,j_1,m_1}\|_{L^2}2^{-m_1}\|\phi_{2,j_2}\|_{L^2}\\
\lesssim&\|\phi_1\|_{X_{0,\frac{1}{2}}}\|\phi_2\|_{X_{0,\frac{1}{2}}}.
\end{split}
\end{equation*}
Here $m(\xi_2,\mu_2)$ denotes the characteristic function of set
$\{|\xi_2|\lesssim 2^{-2m_1},|\mu_2|<1\}$.

\noindent If $|\mu_2|\gtrsim 1$ and $\max(j,j_2)\geq m_1$, when
$j=\max(j,j_2)$, we choose $\min(\frac{1}{2},s)>\delta>0$  such that
$\frac{1}{2}-2s+\delta<|-\frac{1}{2}+|$ and bound (\ref{1}) by
\begin{equation*}
\begin{split}
&\sum_{j_1,j\geq0}\sum_{0\leq\max(j_2, m_1)\leq
j}2^{j(-\frac{1}{2}+)}2^{m_1(1/2-2s+\delta)}
\|g_j\|_{L^2}\||D_x|^{\frac{1}{2}-\delta}\phi_{1,j_1,m_1}\|_{L^\frac{2}{1-2\delta}_T(L^{\frac{1}{\delta}}_{x,y})}\\
&\quad\quad\times\||D_x|^{\delta}\phi_{2,j_2}\|_{L^{\frac{1}{\delta}}_T(L^\frac{2}{1-2\delta}_{x,y})}\\
&\lesssim \sum_{j_1,j\geq0}\sum_{0\leq \max(j_2, m_1)\leq
j}2^{j(-\frac{1}{2}+)}2^{m_1(1/2-2s+\delta)}2^{j_1/2}2^{j_2/2}\|g_j\|_{L^2}
\|\phi_{1,j_1,m_1}\|_{L^2}\|\phi_{2,j_2}\|_{L^2}\\
&\lesssim\|\phi_1\|_{X_{0,\frac{1}{2}}}\|\phi_2\|_{X_{0,\frac{1}{2}}}.
\end{split}
\end{equation*}
While for the case $j_2=\max(j,j_2)$, we bound (\ref{1}) by
\begin{equation*}
\begin{split}
&\sum_{j_1,j_2\geq0}\sum_{0\leq \max(j, m_1)\leq
j_2}2^{j(-\frac{1}{2}+)}2^{m_1(1/2-2s)}
\||D_x|^{\delta}g_j^\vee\|_{L^{\frac{1}{\delta}}_T(L^\frac{2}{1-2\delta}_{x,y})}\\
&\quad\quad\times\||D_x|^{\frac{1}{2}-\delta}\phi_{1,j_1,m_1}\|_{L^\frac{2}{1-2\delta}_T(L^{\frac{1}{\delta}}_{x,y})}
\|\phi_{2,j_2}\|_{L^2}\\
&\lesssim \sum_{j_1,j_2\geq0}\sum_{0\leq \max(j, m_1)\leq
j_2}2^{j(-\frac{1}{2}+)}2^{m_1(1/2-2s)}2^{j/2}2^{j_1/2}\|g_j\|_{L^2}
\|\phi_{1,j_1,m_1}\|_{L^2}\|\phi_{2,j_2}\|_{L^2}\\
&\lesssim\|\phi_1\|_{X_{0,\frac{1}{2}}}\|\phi_2\|_{X_{0,\frac{1}{2}+}}.
\end{split}
\end{equation*}
If $|\mu_2|\gtrsim 1$ and $\max(j,j_2)<m_1$, we have to divided two
subcases to estimate (\ref{1}).

\vspace{0.3cm}

\noindent {\bf Subsubsubcase a:}
$\Big|\frac{\mu_1}{\xi_1}-\frac{\mu_2}{\xi_2}\Big|^2<\frac{1}{2}
|\xi_1+\xi_2|^2|5[\xi_1^2+\xi_1\xi_2+\xi_2^2]-3\alpha|$.

\vspace{0.3cm}

\noindent As we know, the estimate on the resonance function can be
used now. We have $|\xi_1|^4|\xi_2|\lesssim 2^{\max(j,j_1,j_2)}$.
Unfortunately, since $|\xi_2|<1$, the element inequality is not as
good as we have used. We claim that $|\xi_2|\geq |\xi_1|^{-2}$.
Otherwise, if $|\frac{\mu_1}{\xi_1}|\sim |\frac{\mu_2}{\xi_2}|$,
then $|\mu_2|\lesssim|\xi_1|^2|\xi_2|\lesssim 1$. And if
$|\frac{\mu_1}{\xi_1}|\ll |\frac{\mu_2}{\xi_2}|$, since we are in
subcase a:
$\Big|\frac{\mu_1}{\xi_1}-\frac{\mu_2}{\xi_2}\Big|^2<\frac{1}{2}
|\xi_1+\xi_2|^2|5[\xi_1^2+\xi_1\xi_2+\xi_2^2]-3\alpha|$, we have
$|\mu_2|\lesssim |\xi_1|^2|\xi_2|\lesssim 1$. These conflict with
the assumption $|\mu_2|\gtrsim 1$. Thus we have $2^{2m_1}\leq
2^{\max(j,j_1,j_2)}$. It is clear that $j_1=\max(j,j_1,j_2)$. We
bound (\ref{1}) with
\begin{equation*}
\begin{split}
&\sum_{j_1\geq j,j_2\geq0}\sum_{0\leq 2m_1\leq
j_1}2^{j(-\frac{1}{2}+)}2^{m_1(1/2-2s+\delta)}
\||D_x|^{\frac{1}{2}-\delta}g_j^\vee\|_{L^\frac{2}{1-2\delta}_T(L^{\frac{1}{\delta}}_{x,y})}
\|\phi_{1,j_1,m_1}\|_{L^2}\\
&\quad\quad\times\||D_x|^{\delta}\phi_{2,j_2}\|_{L^{\frac{1}{\delta}}_T(L^\frac{2}{1-2\delta}_{x,y})}\\
&\lesssim \sum_{j_1\geq j,j_2\geq0}\sum_{0\leq 2m_1\leq
j_1}2^{j(-\frac{1}{2}+)}2^{m_1(1/2-2s+\delta)}2^{j/2}2^{j_2/2}\|g_j\|_{L^2}
\|\phi_{1,j_1,m_1}\|_{L^2}\|\phi_{2,j_2}\|_{L^2}\\
&\lesssim\|\phi_1\|_{X_{0,\frac{1}{2}+}}\|\phi_2\|_{X_{0,\frac{1}{2}}}.
\end{split}
\end{equation*}

\vspace{0.3cm}

\noindent {\bf Subsubsubcase b:}
$\Big|\frac{\mu_1}{\xi_1}-\frac{\mu_2}{\xi_2}\Big|^2\geq\frac{1}{2}
|\xi_1+\xi_2|^2|5[\xi_1^2+\xi_1\xi_2+\xi_2^2]-3\alpha|$. In this
case, one can run the same argument in the case A1b.

\vspace{0.3cm}

\noindent {\bf Subsubcase A2:}
$|\xi_1|^2\ll\frac{|\mu_1|}{|\xi_1|}$.

\vspace{0.3cm}

The argument in Subsubcase A1 above can also help us to get the same
estimates. We would like to show the different point when we
encounter the case $|\mu_2|\gtrsim1$,
$\Big|\frac{\mu_1}{\xi_1}-\frac{\mu_2}{\xi_2}\Big|^2<\frac{1}{2}|\xi_1+\xi_2|^2
|5[\xi_1^2+\xi_1\xi_2+\xi_2^2]-3\alpha|$. Here we still have
$|\xi_1|^4|\xi_2|\lesssim 2^{\max(j,j_1,j_2)}$. If
$|\mu_1|\lesssim|\mu_2|$, we have
$\frac{|\mu_2|}{|\xi_2|}\gg\frac{|\mu_1|}{|\xi_1|}$. It means that
we also have $|\xi_2|>|\xi_1|^{-2}.$ If $|\mu_1|\gg|\mu_2|$, then we
have $|\mu_1+\mu_2|\sim|\mu_1|$, thus
$\frac{|\mu_1+\mu_2|}{|\xi_1+\xi_2|}\gg|\xi_1+\xi_2|^2$. This does
not appear since we are in case
$|\xi_1+\xi_2|^2\gtrsim\frac{|\mu_1+\mu_2|}{|\xi_1+\xi_2|}.$ Then we
can run the argument in Subsubcase A1.

\vspace{0.3cm}

\noindent {\bf Case B:} $|\xi_1+\xi_2|^2\ll
\frac{|\mu_1+\mu_2|}{|\xi_1+\xi_2|}.$

\vspace{0.3cm}

\noindent {\bf Subcase B1:} $|\mu_1|\lesssim|\mu_2|$.

\vspace{0.3cm}

In this region, we also have $\frac{|\mu_2|}{|\xi_2|}\gg |\xi_1|^2$.
Similar to the argument presented in the second part of Case B1 of
domain $A_2$, we can bound (\ref{2}) with
\begin{equation*}
\begin{split}
\sum_{j_1,j_2,j\geq0}\sum_{m_1\geq0}&2^{j(-\frac{1}{2}+)}
\int_{A_3}g_j(\xi,\mu,\tau)\chi_j(\tau-\omega(\xi,\mu))\chi_2(\xi,\mu)\\
&\times |\xi_1+\xi_2|^{1-s}2^{-m_1s}
\hat\phi_{1,j_1,m_1}(\xi_1,\mu_1,\tau_1)
\hat\phi_{2,j_2}(\xi_2,\mu_2,\tau_2).
\end{split}
\end{equation*} Then the estimate in case A above works.

\vspace{0.3cm}

\noindent {\bf Subcase B2:} $|\mu_1|\gg|\mu_2|$.

\vspace{0.3cm}

It is clear that
$\frac{|\mu_1|}{|\xi_1|}\sim\frac{|\mu_1+\mu_2|}{|\xi_1+\xi_2|}.$
Thus (\ref{2}) by
\begin{equation*}
\begin{split}
\sum_{j_1,j_2,j\geq0}\sum_{m_1\geq0}&2^{j(-\frac{1}{2}+)}\int_{A_3}
g_j(\xi,\mu,\tau)\chi_j(\tau-\omega(\xi,\mu))\chi_2(\xi,\mu)\\
&\times
2^{m_1(1-2s)}\hat\phi_{1,j_1,m_1}(\xi_1,\mu_1,\tau_1)\hat\phi_{2,j_2}(\xi_2,\mu_2,\tau_2).
\end{split}
\end{equation*}
If $|\mu_2|<1$, then we also have $|\xi_2|\leq |\xi_1|^{-2}$. By the
same argument in case A1c, we bound (\ref{2}) by $
\|\phi_1\|_{X_{0,\frac{1}{2}}}\|\phi_2\|_{X_{0,\frac{1}{2}+}}.$

\noindent If $|\mu_2|\geq 1$, the estimates in case A above can also
work until we come to the case $|\xi_2|<|\xi_1|^{-2},$
$\max(j,j_2)<2m_1$ and
$\Big|\frac{\mu_1}{\xi_1}-\frac{\mu_2}{\xi_2}\Big|^2<\frac{1}{2}
|\xi_1+\xi_2|^2|5[\xi_1^2+\xi_1\xi_2+\xi_2^2]-3\alpha|$. Of course,
in this case, the estimate on resonance function can also bring us
$$|\xi_1|^4|\xi_2|\lesssim 2^{\max(j,j_1,j_2)}.$$
But this estimate can not help us to get any benefit since
$|\xi_2|<|\xi_1|^{-2}.$ Fortunately, in this case, for fixed
$\mu_1,\xi_1,\xi_2,$ the variable $\mu_2$ can range in two symmetry
intervals with length $\Delta_{\mu_2}\lesssim
|\xi_1|^2|\xi_2|\lesssim1.$ Represent the change of variables
(\ref{eq:3.14}) here,
\begin{equation*}
\begin{split}
&\sum_{j_1,m_1\geq0}\sum_{2m_1>\max(j,j_2)\geq0}2^{j(-\frac{1}{2}+)}2^{m_1(1-2s)}\int
g_j(\xi,\mu,\theta_1+\omega(\xi_1,\mu_1)+\theta_2+\omega(\xi_2+\mu_2))
\\
&\times\chi_j(\theta_1+\theta_2+\omega(\xi_1,\mu_2)+\omega(\xi_2+\mu_2)-
\omega(\xi_1+\xi_2,\mu_1+\mu_2))
\\
&\times\hat\phi_{1,j_1,m_1}(\xi_1,\mu_1,\theta_1+\omega(\xi_1,\mu_1))
\hat\phi_{2,j_2}(\xi_2,\mu_2,\theta_2+\omega(\xi_2,\tau_2))d\xi_1d\mu_1d\xi_2d\mu_2d\theta_1d\theta_2.
\end{split}
\end{equation*}
By Cauchy-Schwarz inequality, we control the integral
(\ref{eq:3.14}) by
\begin{equation*}
\begin{split}
&\|\phi_{1,j_1,m_1}\|_{L^2}\left(\int\Big| \int
H(\xi_1,\xi_2,\mu_1,\mu_2,\theta_1,\theta_2)
d\xi_2d\mu_2d\theta_2\Big|^2
d\xi_1d\mu_1d\theta_1\right)^\frac{1}{2}\\
&\lesssim2^{j_2/2}2^{-m_1}\|\phi_{1,j_1,m_1}\|_{L^2}\left(\int\int
|H(\xi_1,\xi_2,\mu_1,\mu_2,\theta_1,\theta_2)|^2
d\xi_2d\mu_2d\theta_2 d\xi_1d\mu_1d\theta_1\right)^\frac{1}{2}\\
&\lesssim2^{j_2/2}2^{-m_1}\|\phi_{1,j_1,m_1}\|_{L^2}\|g_j\|_{L^2}\|\phi_{2,j_2}\|.
\end{split}
\end{equation*}
Here $H(\xi_1,\xi_2,\mu_1,\mu_2,\theta_1,\theta_2)$ denotes
$g_j(\xi,\mu,\theta_1+\omega(\xi_1,\mu_1)+\theta_2+\omega(\xi_2+\mu_2))
\chi_j(\theta_1+\theta_2+\omega(\xi_1,\mu_2)+\omega(\xi_2+\mu_2)-
\omega(\xi_1+\xi_2,\mu_1+\mu_2))\hat\phi_{2,j_2}(\xi_2,\mu_2,\theta_2+\omega(\xi_2,\tau_2)).$
Now we put this estimate into the summation above to obtain
$$(\ref{2})\lesssim \|\phi_1\|_{X_{0,\frac{1}{2}+}}\|\phi_2\|_{X_{0,\frac{1}{2}}}.$$

\vspace{0.5cm}

\noindent Region II:
$\frac{|\mu_2|}{|\xi_2|}\ll\max\Big(|\xi_1|^2,\frac{|\mu_1|}{|\xi_1|}\Big).$

\vspace{0.3cm}

\noindent {\bf Case A:} $|\xi_1|^2\gg\frac{|\mu_1|}{|\xi_1|}$.

\vspace{0.3cm}

\noindent Since $|\xi_1+\xi_2|^3\sim |\xi_1|^3\gg |\mu_1|$ and
$|\xi_1|^3\gg|\xi_1|^2|\xi_2|\gg|\mu_2|$, the resonant interaction
does not happen, so $2^{\max(j,j_1,j_2)}\geq |\xi_1|^4|\xi_2|.$

\noindent If $|\mu_2|<1$ and $j=\max(j,j_1,j_2)$, then $2^j\geq
2^{4m_1+m_2}$. In the same way, we bound (\ref{1}) by
\begin{equation*}
\begin{split}
&\sum_{j,j_1,j_2\geq 0}\sum_{j\geq
4m_1+m_2}\sum_{m_2<m_1}2^{j(-\frac{1}{2}+)}2^{m_1}\|g_j\|_{L^2}\|\phi_{1,j_1,m_1}\|_{L^\infty_T(L^2_{x,y})}\\
&\quad\quad\quad\times\|(m_{m_2}(\xi_2,\mu_2)\hat\phi_{2,j_2})^\vee\|_{L^2_T(L^\infty_{x,y})}\\
\lesssim&\sum_{j,j_1,j_2\geq 0}\sum_{j\geq4m_1+m_2}
\sum_{m_2<m_1}2^{j(-\frac{1}{2}+)}2^{m_1}\|g_j\|_{L^2}2^{j_1/2}
\|\phi_{1,j_1,m_1}\|_{L^2}2^{m_2/2}\|\phi_{2,j_2}\|_{L^2}\\
\lesssim&\|\phi_1\|_{X_{0,\frac{1}{2}}}\|\phi_2\|_{X_{0,\frac{1}{2}+}}.
\end{split}
\end{equation*}
Here we used Proposition \ref{prop5} with $m_{m_2}$ denoting a class
of multipliers which are the characteristic functions of the sets
$\{|\xi_2|\sim 2^{m_2},|\mu_2|<1\}$.

\noindent If $|\mu_2|<1$ and $j_1=\max(j,j_1,j_2)$ or
$j_2=\max(j,j_1,j_2)$ is the maximal value, similarly we have
\begin{equation*}
\begin{split}
(\ref{1})&\lesssim\sum_{j,j_1,j_2\geq 0}\sum_{j_1\geq
4m_1+m_2}\sum_{m_2<m_1}2^{j(-\frac{1}{2}+)}2^{m_1}\|\phi_{1,j_1,m_1}\|_{L^2}\|g_j^\vee\|_{L^\infty_T(L^2_{x,y})}\\
&\quad\quad\times\|(m_{m_2}(\xi_2,\mu_2)\hat\phi_{2,j_2})^\vee\|_{L^2_T(L^\infty_{x,y})}\\
\lesssim&\sum_{j,j_1,j_2\geq 0}\sum_{j_1\geq
4m_1+m_2}\sum_{m_2<m_1}2^{j(-\frac{1}{2}+)}2^{m_1}2^{j/2}2^{m_2/2}\|g_j\|_{L^2}\|\phi_{1,j_1,m_1}\|_{L^2}
\|\phi_{2,j_2}\|_{L^2}\\
\lesssim&\|\phi_1\|_{X_{0,\frac{1}{2}}}\|\phi_2\|_{X_{0,\frac{1}{2}+}}.
\end{split}
\end{equation*}

\noindent If $|\mu_2|\geq 1$ and $\max(j,j_2)\geq 2m_1$, let
$j=\max(j,j_2)$, there exists $\min(\frac{1}{2},s)>\delta>0$ and
$|-\frac{1}{2}+|>\frac{1}{4}+\frac{1}{2}\delta>0$ such that
\begin{equation*}
\begin{split}
(\ref{1})&\lesssim\sum_{j_1,j\geq0}\sum_{0\leq\max(j_2, 2m_1)\leq
j}2^{j(-\frac{1}{2}+)}2^{m_1(1/2+\delta)}
\|g_j\|_{L^2}\||D_x|^{\frac{1}{2}-\delta}\phi_{1,j_1,m_1}\|_{L^\frac{2}{1-2\delta}_T(L^{\frac{1}{\delta}}_{x,y})}\\
&\quad\quad\times\||D_x|^{\delta}\phi_{2,j_2}\|_{L^{\frac{1}{\delta}}_T(L^\frac{2}{1-2\delta}_{x,y})}\\
&\lesssim \sum_{j_1,j\geq0}\sum_{0\leq \max(j_2, 2m_1)\leq
j}2^{j(-\frac{1}{2}+)}2^{m_1(1/2+\delta)}2^{j_1/2}2^{j_2/2}\|g_j\|_{L^2}
\|\phi_{1,j_1,m_1}\|_{L^2}\|\phi_{2,j_2}\|_{L^2}\\
&\lesssim\|\phi_1\|_{X_{0,\frac{1}{2}}}\|\phi_2\|_{X_{0,\frac{1}{2}}}.
\end{split}
\end{equation*}
For the case $j_2=\max(j,j_2)$, we bound (\ref{1}) by
\begin{equation*}
\begin{split}
&\sum_{j_1,j_2\geq0}\sum_{0\leq\max(j, 2m_1)\leq
j_2}2^{j(-\frac{1}{2}+)}2^{m_1/2}
\||D_x|^\frac{1}{4}g_j^\vee\|_{L^4_T(L^4_{x,y})}\||D_x|^\frac{1}{4}\phi_{1,j_1,m_1}\|_{L^4_T(L^4_{x,y})}
\|\phi_{2,j_2}\|_{L^2}\\
&\lesssim \sum_{j_1,j_2\geq0}\sum_{0\leq\max(j, 2m_1)\leq
j_2}2^{j(-\frac{1}{2}+)}2^{m_1/2}2^{j/2}2^{j_1/2}\|g_j\|_{L^2}
\|\phi_{1,j_1,m_1}\|_{L^2}\|\phi_{2,j_2}\|_{L^2}\\
&\lesssim\|\phi_1\|_{X_{0,\frac{1}{2}}}\|\phi_2\|_{X_{0,\frac{1}{2}}}.
\end{split}
\end{equation*}

\noindent If $|\mu_2|\geq1$ and $\max(j,j_2)< 2m_1$, we would like
to perform a dyadic decomposition by setting $|\xi_i|\sim 2^{m_i}$
with $i=1,2$ and $m_1\geq0, m_2\in\mathbb{Z}$. The dyadic
decomposition with respect to $|\mu_2|\sim 2^{n_2}, n_2\geq0$ will
be useful. Another useful note is that $m_2^*=\max(n_2-m_2, 2m_2).$

We perform the change of variables (\ref{eq:3.5}). It is easy to see
that $|J_\mu|> |\xi_1|^4$, so
\begin{equation*}
\begin{split}
(\ref{1})&\lesssim\sum_{j_1,m_1,n_2\geq
0}\sum_{m_2}\sum_{2m_1>\max(j,j_2)\geq0}2^{j(-\frac{1}{2}+)}2^{m_1}\int
g_j(u,v,w)\chi_j(u,v,w)\\
&\times|J_\mu|^{-1}H(u,v,w,\mu_2,\theta_1,\theta_2)dudvdwd\mu_2d\theta_1d\theta_2\\
&\lesssim \sum_{j_1,m_1,n_2\geq
0}\sum_{m_2}\sum_{2m_1>\max(j,j_2)\geq0}2^{j(-\frac{1}{2}+)}2^{-m_1}2^{n_2/2}2^{j_1/2}2^{j_2/2}\\
&\times \|g_j\|_{L^2}\|\phi_{1,j_1,m_1}\|_{L^2}
\frac{\|\phi_{2,j_2,m_2,n_2}\|_{L^2}}{\max(1,2^{m_2^*})^s}.
\end{split}
\end{equation*}
If $m_2\geq 0$ and $n_2-m_2<0$, we bound (\ref{1}) with
\begin{equation*}
\begin{split}
&\sum_{j,j_1,j_2,m_1\geq 0}\sum_{0\leq n_2\leq
m_2<m_1}2^{j(-\frac{1}{2}+)}
2^{-m_1}2^{n_2/2}2^{(-2m_2)s}\\
&\times
2^{j_1/2}2^{j_2/2}\|g_j\|_{L^2}\|\phi_{1,j_1,m_1}\|_{L^2}\|\phi_{2,j_2,m_2,n_2}\|_{L^2}\\
&\lesssim\|\phi_1\|_{X_{0,\frac{1}{2}+}}\|\phi_2\|_{X_{0,\frac{1}{2}}}.
\end{split}
\end{equation*}
 If $2m_2\geq n_2-m_2\geq
0$ and $j>2m_2$, we have
\begin{equation*}
\begin{split}
(\ref{1})&\lesssim\sum_{j_1,j_2\geq 0}\sum_{0\leq n_2-
m_2<2m_2}\sum_{j\geq 2m_2\geq0}\sum_{m_1\geq m_2\geq 0}
2^{j(-\frac{1}{2}+)}2^{-m_1}2^{(n_2-m_2)/2}2^{m_2/2}2^{-2m_2s}\\
&\times
2^{j_1/2}2^{j_2/2}\|g_j\|_{L^2}\|\phi_{1,j_1,m_1}\|_{L^2}\|\phi_{2,j_2,m_2,n_2}\|_{L^2}\\
&\lesssim\|\phi_1\|_{X_{0,\frac{1}{2}+}}\|\phi_2\|_{X_{0,\frac{1}{2}}}.
\end{split}
\end{equation*}
 If $n_2-m_2\geq2m_2\geq0$ and $j>2m_2$, since $\frac{|\mu_2|}{|\xi_2|}\ll\max(|\xi_1|^2,\frac{|\mu_1|}{|\xi_1|})$
and $|\xi_1|^2\gg\frac{|\mu_1|}{|\xi_1|}$, one can get
$(n_2-m_2)\leq 2m_1.$ Recalling that $s>0$, we obtain
\begin{equation*}
\begin{split}
(\ref{1})&\lesssim\sum_{j_1,j_2\geq 0}\sum_{0<2m_2\leq n_2-
m_2}\sum_{j\geq 2m_2\geq0}\sum_{m_1\geq m_2\geq 0}
2^{j(-\frac{1}{2}+)}2^{-m_1}2^{(n_2-m_2)/2}2^{m_2/2}2^{-(n_2-m_2)s}\\
&\times
2^{j_1/2}2^{j_2/2}\|g_j\|_{L^2}\|\phi_{1,j_1,m_1}\|_{L^2}\|\phi_{2,j_2,m_2,n_2}\|_{L^2}\\
&\lesssim\|\phi_1\|_{X_{0,\frac{1}{2}+}}\|\phi_2\|_{X_{0,\frac{1}{2}}}.
\end{split}
\end{equation*}
\noindent If $m_2<0$, we have
\begin{equation*}
\begin{split}
(\ref{1})&\sum_{j_1,m_1\geq 0}\sum_{ m_2< 0}\sum_{2m_1\geq
n_2-m_2\geq 0}
\sum_{2m_1\geq\max(j_2,j)\geq0}2^{j(-\frac{1}{2}+)}2^{-m_1}2^{(n_2-m_2)(1/2-s)}2^{j_1/2}2^{j_2/2}2^{m_2/2}\\
&\quad\quad\times
\|g_j\|_{L^2}\|\phi_{1,j_1,m_1}\|_{L^2}\|\phi_{2,j_2,m_2,n_2}\|_{L^2}\\
&\lesssim\sum_{j_1,m_1\geq
0}\sum_{2m_1\geq \max(j_2,j)\geq0}2^{j(-\frac{1}{2}+)}2^{-2m_1s}2^{j_1/2}2^{j_2/2}\\
&\quad\quad\times
\|g_j\|_{L^2}\|\phi_{1,j_1,m_1}\|_{L^2}\|\phi_{2,j_2}\|_{L^2}\\
&\lesssim\|\phi_1\|_{X_{0,\frac{1}{2}}}\|\phi_2\|_{X_{0,\frac{1}{2}}}.
\end{split}
\end{equation*}
We now consider the case $0\leq \max(j,j_2)\leq m_1$, $0\leq j<2m_2$
and $n_2-m_2>0$. It is clear that $j_1=\max(j,j_1,j_2)$ and
$|\xi_1|^4\lesssim 2^{j_1}$, since $2^{\max(j,j_1,j_2)}\gtrsim
|\xi_1|^4|\xi_2|$. We bound (\ref{1}) by
\begin{equation*}
\begin{split}
\sum_{j_1,j_2\geq0}\sum_{j\geq 0}&2^{j(-\frac{1}{2}+)}\int_{A_3}
g_j(\xi,\mu,\tau)\chi_j(\xi,\mu,\tau)\\
&\times|\xi_1+\xi_2|\hat\phi_{1,j_1}(\xi_1,\mu_1,\tau_1)
\frac{\hat\phi_{2,j_2}(\xi_2,\mu_2,\tau_2)}{\Big(1+|\xi_2|^2+\frac{|\mu_2|}{|\xi_2|}\Big)^s}.
\end{split}
\end{equation*}
There exists $\min(\frac{1}{2},s)>\delta>0$ small enough such that
\begin{equation*}
\begin{split}
(\ref{1})&\lesssim\sum_{m_1\geq 0}\sum_{j_1\geq 4m_1\geq
0}\sum_{j_1\geq 4m_1\geq 0}\sum_{2m_1\geq
\max(j_2,j)\geq0}2^{j(-\frac{1}{2}+)}2^{m_1(1/2+\delta)}\\
&\quad\quad\times
\||D_x|^{1/2-\delta}g_j^\vee\|_{L^\frac{2}{1-2\delta}_T(L^\frac{1}{\delta}_{x,y})}
\|\phi_{1,j_1,m_1}\|_{L^2}\||D_x|^{\delta}\phi_{2,j_2}\|_{L^{\frac{1}{\delta}}_T(L^\frac{2}{1-2\delta}_{x,y})}\\
&\lesssim\sum_{m_1\geq 0}\sum_{j_1\geq 4m_1\geq 0}\sum_{2m_1\geq
\max(j_2,j)\geq0}2^{j(-\frac{1}{2}+)}2^{m_1(1/2+\delta)}2^{j/2}2^{j_2/2}
\|g_j\|_{L^2}\|\phi_{1,j_1,m_1}\|_{L^2}\|\phi_{2,j_2}\|_{L^2}\\
&\lesssim\|\phi_1\|_{X_{0,\frac{1}{2}+}}\|\phi_2\|_{X_{0,\frac{1}{2}}}.
\end{split}
\end{equation*}

\vspace{0.3cm}

\noindent{\bf Case B:} $|\xi_1|^2\ll\frac{|\mu_1|}{|\xi_1|}.$

\vspace{0.3cm}

We first note that $|\mu_2|\ll|\mu_1|$, otherwise we have
$\frac{|\mu_2|}{|\xi_2|}\gtrsim\frac{|\mu_1|}{|\xi_1|}$, which is
contradiction with the assumption
$\frac{|\mu_2|}{|\xi_2|}\ll\frac{|\mu_1|}{|\xi_1|}$ and
$|\xi_2|\ll|\xi_1|.$ Thus we have $|\mu_1+\mu_2|\sim|\mu_1|$. The
argument in case A can be run smoothly until we come to the case
$|\mu_2|\geq1$ and $\max(j,j_2)< 2m_1$. We perform the change of
variables (\ref{eq:3.5}). It is easy to see that $|J_\mu|\gtrsim
|\xi_1|^4$. By the same estimate in (\ref{eq:3.11}), for fixed
$\theta_1,\theta_2,\xi_1,\xi_2,\mu_1$, the length of the symmetric
intervals where free variable $\mu_2$ can range is
$\Delta_{\mu_2}<2^{j-2m_1}$. Then we have
\begin{equation*}
\begin{split}
(\ref{2})&\lesssim\sum_{j_1,m_1\geq
0}\sum_{2m_1>\max(j_2,j)\geq0}2^{j(-\frac{1}{2}+)}2^{m_1}\int
g_j(u,v,w)\chi_j(u,v,w)\\
&\times|J_\mu|^{-1}H(u,v,w,\mu_2,\theta_1,\theta_2)dudvdwd\mu_2d\theta_1d\theta_2\\
&\lesssim \sum_{j_1,m_1\geq
0}\sum_{2m_1>\max(j_2,j)\geq0}2^{j(-\frac{1}{2}+)}2^{j/2-2m_1}2^{j_1/2}2^{j_2/2}\\
&\times \|g_j\|_{L^2}\|\phi_{1,j_1,m_1}\|_{L^2}
\|\phi_{2,j_2}\|_{L^2}\\
&\lesssim
\|\phi_1\|_{X_{0,\frac{1}{2}}}\|\phi_2\|_{X_{0,\frac{1}{2}}}.
\end{split}
\end{equation*}

\vspace{0.3cm}

\noindent {\bf Case C:} $|\xi_1|^2\sim \frac{|\mu_1|}{|\xi_1|}.$

\vspace{0.3cm}

Since
$\frac{|\mu_2|}{|\xi_2|}\ll|\xi_1|^2\sim\frac{|\mu_1|}{|\xi_1|},$ we
also have $|\mu_1+\mu_2|\sim|\mu_1|$. In this case, the resonant
interaction will happen. We bound (\ref{1}) and (\ref{2}) by
\begin{equation*}
\sum_{j_1,j_2\geq0}\sum_{j\geq 0}\int_{A_3}
g_j(\xi,\mu,\tau)\chi_j(\xi,\mu,\tau)|\xi_1+\xi_2|\hat\phi_{1,j_1}(\xi_1,\mu_1,\tau_1)
\frac{\hat\phi_{2,j_2}(\xi_2,\mu_2,\tau_2)}{\Big(1+|\xi_2|^2+\frac{|\mu_2|}{|\xi_2|}\Big)^s}.
\end{equation*}

\noindent We decompose $|\xi_1|\sim 2^{m_1}, m_1\geq 0$, and first
consider a special case $|\mu_2|<1$ and
$|\xi_2|\leq|\xi_1|^{-2-\varepsilon}$ for some $\varepsilon>0$ small
enough. In this case, we can use Proposition \ref{prop5}. (\ref{1})
and (\ref{2}) can be bounded by
\begin{equation*}
\begin{split}
&\sum_{j,j_1,j_2\geq 0}\sum_{m_1\geq
0}2^{j(-\frac{1}{2}+)}2^{m_1}\|g_j\|_{L^2}\|\phi_{1,j_1,m_1}\|_{L^\infty_T(L^2_{x,y})}
\|(m(\xi_2,\mu_2)\hat\phi_{2,j_2})^\vee\|_{L^2_T(L^\infty_{x,y})}\\
\lesssim&\sum_{j,j_1,j_2\geq 0}\sum_{m_1\geq
0}2^{j(-\frac{1}{2}+)}2^{-\frac{\varepsilon}{2} m_1}\|g_j\|_{L^2}2^{j_1/2}
\|\phi_{1,j_1,m_1}\|_{L^2}\|\phi_{2,j_2}\|_{L^2}\\
\lesssim&\|\phi_1\|_{X_{0,\frac{1}{2}}}\|\phi_2\|_{X_{0,\frac{1}{2}}}.
\end{split}
\end{equation*}
In the remaining estimates, we always have
$|\xi_2|>|\xi_1|^{-2-\varepsilon}$ for the same $\varepsilon$ as
above. In fact, $|\mu_2|>1$ implies
$|\xi_2|>|\xi_1|^{-2}>|\xi_1|^{-2-\varepsilon}$, since
$|\mu_2|\ll|\xi_1|^2|\xi_2|$.

Now we consider the case $\max(j,j_2)\geq (2-\varepsilon)m_1$ for
the same $\varepsilon$ as above. When $j=\max(j,j_2)$, there exists
$\min(\frac{1}{6},s)>\delta>0$ small enough such that
\begin{equation*}
\begin{split}
(\ref{1}),(\ref{2})&\lesssim\sum_{j_1,j\geq0}\sum_{0\leq\max( j_2,
(2-\varepsilon)m_1)\leq
j}2^{j(-\frac{1}{2}+)}2^{m_1(1/2+\delta)}2^{(2+\varepsilon)m_1\delta}
\|g_j\|_{L^2}\\
&\quad\quad\times\||D_x|^{\frac{1}{2}-\delta}\phi_{1,j_1,m_1}\|_
{L^\frac{2}{1-2\delta}_T(L^{\frac{1}{\delta}}_{x,y})}\||D_x|^{\delta}\phi_{2,j_2}
\|_{L^{\frac{1}{\delta}}_T(L^\frac{2}{1-2\delta}_{x,y})}\\
&\lesssim \sum_{j_1,j\geq0}\sum_{0\leq\max( j_2,
(2-\varepsilon)m_1)\leq
j}2^{j(-\frac{1}{2}+)}2^{m_1(1/2+(3+\varepsilon)\delta)}2^{j_1/2}2^{j_2/2}\\
&\quad\quad\quad\times\|g_j\|_{L^2}
\|\phi_{1,j_1,m_1}\|_{L^2}\|\phi_{2,j_2}\|_{L^2}\\
&\lesssim\|\phi_1\|_{X_{0,\frac{1}{2}}}\|\phi_2\|_{X_{0,\frac{1}{2}}}.
\end{split}
\end{equation*}
When $j_2=\max(j,j_2)$,
\begin{equation*}
\begin{split}
(\ref{1}),(\ref{2})&\lesssim\sum_{j_1,j_2\geq0}\sum_{j_2\geq
\max((2-\varepsilon)m_1,j)\geq0}2^{j(-\frac{1}{2}+)}2^{m_1/2}
\||D_x|^\frac{1}{4}g_j^\vee\|_{L^4_T(L^4_{x,y})}\||D_x|^\frac{1}{4}\phi_{1,j_1,m_1}\|_{L^4_T(L^4_{x,y})}\\
&\quad\quad\times\|\phi_{2,j_2}\|_{L^2}\\
&\lesssim\sum_{j_1,j_2\geq0}\sum_{j_2\geq
\max(j,(2-\varepsilon)m_1)\geq0}2^{j(-\frac{1}{2}+)}2^{j/2}2^{m_1/2}2^{j_1/2}
\|g_j\|_{L^2}\|\phi_{1,j_1,m_1}\|_{L^2}\|\phi_{2,j_2}\|_{L^2}.\\
&\lesssim
\|\phi_1\|_{X_{0,\frac{1}{2}}}\|\phi_2\|_{X_{0,\frac{1}{2}+}}.
\end{split}
\end{equation*}

\noindent At last, we consider the case
$\max(j,j_2)<(2-\varepsilon)m_1$ for the same $\varepsilon$ as
above.

\vspace{0.3cm}

\noindent {\bf Subcase 1:}
$\Big|\frac{\mu_1}{\xi_1}-\frac{\mu_2}{\xi_2}\Big|^2<\frac{1}{2}|\xi_1+\xi_2|^2
|5(\xi_1^2+\xi_1\xi_2+\xi_2^2)-3\alpha|.$

\vspace{0.3cm}

\noindent Since now the resonant interaction does not happen, we
have $|\xi_1|^4|\xi_2|\leq 2^{\max(j,j_1,j_2)}$. And because
$|\xi_2|>|\xi_1|^{-2-\varepsilon}$, we get that
$j_1=\max(j,j_1,j_2)\geq (2-\varepsilon)m_1$. By choosing
$\min(\frac{1}{6},s)>\delta>0$ small enough, we have
\begin{equation*}
\begin{split}
(\ref{1}),(\ref{2})&\lesssim\sum_{j_1\geq0}\sum_{0\leq \max(j,
j_2)\leq (2-\varepsilon)m_1\leq
j_1}2^{j(-\frac{1}{2}+)}2^{m_1(1/2+\delta)}2^{(2+\varepsilon)m_1\delta}
\|\phi_{1,j_1,m_1}\|_{L^2}\\
&\quad\quad\times\||D_x|^{\frac{1}{2}-\delta}g_j^\vee\|_{L^\frac{2}{1-2\delta}_T(L^{\frac{1}{\delta}}_{x,y})}\||D_x|^{\delta}\phi_{2,j_2}\|_{L^{\frac{1}{\delta}}_T(L^\frac{2}{1-2\delta}_{x,y})}\\
&\lesssim \sum_{j_1\geq0}\sum_{0\leq \max(j, j_2)\leq
(2-\varepsilon)m_1\leq
j_1}2^{j(-\frac{1}{2}+)}2^{m_1(1/2+(3+\varepsilon)\delta)}2^{j/2}2^{j_2/2}\|g_j\|_{L^2}
\|\phi_{1,j_1,m_1}\|_{L^2}\|\phi_{2,j_2}\|_{L^2}\\
&\lesssim\|\phi_1\|_{X_{0,\frac{1}{2}+}}\|\phi_2\|_{X_{0,\frac{1}{2}}}.
\end{split}
\end{equation*}

\vspace{0.3cm}

\noindent {\bf Subcase 2:}
$\Big|\frac{\mu_1}{\xi_1}-\frac{\mu_2}{\xi_2}\Big|^2\geq\frac{1}{2}|\xi_1+\xi_2|^2
|5(\xi_1^2+\xi_1\xi_2+\xi_2^2)-3\alpha|$.

\vspace{0.3cm}

\noindent As we know, we also need to consider two cases.
\begin{equation}\label{eq:3.15}\Big|5(\xi_1^4-\xi_2^4)-3\alpha(\xi_1^2-\xi_2^2)-
\Big[\Big(\frac{\mu_1}{\xi_1}\Big)^2-\Big(\frac{\mu_2}{\xi_2}\Big)^2\Big]\Big|\geq
|\xi_1|^{\frac{1}{2}},\end{equation} and
\begin{equation}\label{eq:3.16}\Big|5(\xi_1^4-\xi_2^4)-3\alpha(\xi_1^2-\xi_2^2)-
\Big[\Big(\frac{\mu_1}{\xi_1}\Big)^2-\Big(\frac{\mu_2}{\xi_2}\Big)^2\Big]\Big|<|\xi_1|^{\frac{1}{2}}.\end{equation}
(\ref{eq:3.15}) means the determinant of Jacobian of the change of
variables (\ref{eq:3.5}), $|J_\mu|\geq |\xi_1|^\frac{1}{2}$. Thus we
have
\begin{equation*}
\begin{split}
(\ref{1}),(\ref{2})&\lesssim
\sum_{m_1,j_1\geq0}\sum_{(2-\varepsilon)m_1>\max(j_2,j)\geq0}2^{j(-\frac{1}{2}+)}
2^{j/2-m_1}2^{m_1(1-\frac{1}{4})}2^{j_1/2}2^{j_2/2}\\
&\quad\quad\quad\times\|g_j\|_{L^2}
\|\phi_{1,j_1,m_1}\|_{L^2}\|\phi_{2,j_2}\|_{L^2}.\\
&\lesssim\|\phi_1\|_{X_{0,\frac{1}{2}}}\|\phi_2\|_{X_{0,\frac{1}{2}}}.
\end{split}
\end{equation*}
When (\ref{eq:3.16}) occurs, we recur to the change of variables
(\ref{eq:3.7}). By the argument in (\ref{eq:3.17}) and
(\ref{eq:3.18}), for fixed $\theta_1,\theta_2,\xi_2,\mu_1,\mu_2$,
the length of the interval where $\xi_1$ ranges is
$|\xi_1|<2^{(\frac{1}{2}-3)m_1}$. Thus we obtain
\begin{equation*}
\begin{split}
(\ref{1}),(\ref{2})&\lesssim
\sum_{m_1,j_1\geq0}\sum_{(2-\varepsilon)m_1>\max(j_2,j)\geq0}2^{j(-\frac{1}{2}+)}2^{-(2-\frac{1}{2})m_1}
2^{m_1}2^{j_1/2}2^{j_2/2}\\
&\quad\quad\quad\times\|g_j\|_{L^2}
\|\phi_{1,j_1,m_1}\|_{L^2}\|\phi_{2,j_2}\|_{L^2}.\\
&\lesssim\|\phi_1\|_{X_{0,\frac{1}{2}}}\|\phi_2\|_{X_{0,\frac{1}{2}}}.
\end{split}
\end{equation*}
We now finish the proof of Theorem \ref{biestimate}.

\end{proof}

\hspace{5mm}

\hspace{5mm}

\section{Proof of Main Theorem}

We now state the proof of Theorem \ref{th:2}.

\vspace{0.3cm}

\begin{proof}: Considering the integral equation according to
(\ref{5KP})
\begin{eqnarray}\label{eq:4.1}
u(t)=\psi(t)\Big[S(t)u_0-\frac{1}{2}
\int_0^tS(t-t')\partial_x(\psi_T^2(t')u^2(t'))dt'\Big],
\end{eqnarray}
where $0<T<1$, and $\psi_T(t)$ is the same bump function with
(\ref{bump}). It is clear that a solution for (\ref{eq:4.1}) is a
fixed point of the nonlinear operator
\begin{equation}\label{eq:4.2}
L(u)=\psi(t)S(t)u_0-\frac{1}{2}\psi(t)\int_0^t
S(t-t')\partial_x(\psi_T^2(t')u^2(t'))dt'.
\end{equation}
Thus we need to prove the operator $L$ is a contractive mapping from
the following closed set to itself
\begin{eqnarray}
B_a=\Big\{u\in X_{s,\,b},\,\|u\|_{X_{s,b}}\leq a\Big\},
\end{eqnarray}
where $a=4C\|u_0\|_{E_s}$. By Proposition \ref{prop1} and Theorem
\ref{biestimate}, there exist $\sigma
>0$ such that
\begin{eqnarray}\label{eq:4.5}
\|L(u)\|_{X_{s,\frac{1}{2}+}}\leq
C\|u_0\|_{E_s}+CT^\sigma\|u\|_{X_{s,\frac{1}{2}+}}^2.
\end{eqnarray}
Next, since
$\partial_x(u^2)-\partial_x(v^2)=\partial_x[(u-v)(u+v)]$, we get in
the same way that
\begin{eqnarray}\label{eq:4.3}
\|L(u)-L(v)\|_{X_{s,\,\frac{1}{2}+}}&\leq &
CT^\sigma\|u-v\|_{X_{s,\,\frac{1}{2}+}}
\Big(\|u\|_{X_{s,\,\frac{1}{2}+}}+\|v\|_{X_{s,\,\frac{1}{2}+}}\Big).
\end{eqnarray}
By choosing $T=T(\|u_0\|_{E_s})$ such that
$8CT^\sigma\|u_0\|_{E_s}<1$, we deduce that from (\ref{eq:4.5}) and
(\ref{eq:4.3}) that $L$ is strictly contractive on the ball $B_a$.
Thus, there exists unique solution to the IVP of the fifth order
KP-I equation $u\in X_{s,\frac{1}{2}+}$ on the interval $[-T,T]$.
The smoothness of the mapping from $E_s$ to $X_{s,\frac{1}{2}+}$
follows from the fixed point argument. Since
$X_{s,\frac{1}{2}+}\subset C([-T,T];E_s)$, we finish the proof of
Theorem \ref{th:2}.
\end{proof}

\vspace{0.5cm}

\noindent {\bf Acknowledgements}

\vspace{0.3cm}

The authors would like to express their gratitude to the anonymous
referee and the associated editor for their invaluable comments
and suggestions which helped improve the paper greatly. J.Li is
also very grateful to M.\,Ben-Artzi for sharing us their research
paper \cite{BenSau}. All the authors are supported by the NNSF of
China (No.10725102, No.10771130, No.10626008).

\hspace{5mm}
\begin{center}
\end{center}

\end{document}